    \def\tank#1{\protected@xdef\@thanks{\@thanks
     \protect\footnotetext[0]{#1}}}
    \def\bigfoot{

     \@footnotetext}
    \newcommand{\ea}{\end{array}}
\numberwithin{equation}{section}
\newtheorem{theorem}{Theorem}[section]
\newtheorem{lemma}{Lemma}[section]
\newtheorem{proposition}[theorem]{Proposition}
\def\beq{\begin{equation}}
\def\nneq{\end{equation}}
\def\bthm{\begin{theorem}}
\def\nthm{\end{theorem}}
\def\blem{\begin{lemma}}
\def\nlem{\end{lemma}}
\def\bprf{\begin{proof}}
\def\nprf{\end{proof}}
\def\bprop{\begin{prop}}
\def\nprop{\end{prop}}
\def\brmk{\begin{rem}}
\def\nrmk{\end{rem}}
\def\bexa{\begin{exa}}
\def\nexa{\end{exa}}
\def\bcor{\begin{cor}}
\def\ncor{\end{cor}}
\def\RR{\mathbb{R}}
\def\EE{\mathbb{E}}
\def\cF{\mathcal{F}}
\def\cD{\mathcal{D}}
\newcommand\bp{\mathbb{P}}
\def\ee{{\mathbb E}}
\def\e{{\varepsilon}}
\newcommand{\lc}{\left(}
\newcommand{\rc}{\right)}
\newcommand{\lt}{\left }
\newcommand{\rt}{\right}
\title[Chung's LIL for the linear SFHE at origin]{Chung's LIL for the linear stochastic fractional  heat equation   at origin}
    \author[C. Liu]{Chang Liu}
    \address[]{Chang Liu, School of Mathematics and Statistics,  Wuhan University,  Wuhan, 430072,
    China.}
    \email{changliu0504@163.com}
    \author[R. Wang]{Ran Wang}
    \address[]{Ran Wang, School of Mathematics and Statistics,  Wuhan University,  Wuhan, 430072,
    China.}
    \email{rwang@whu.edu.cn}
    \date{}
\begin{document}
    \maketitle

     \noindent {\bf Abstract:}
  Consider  the   linear stochastic fractional heat equation with vanishing initial condition:
$$
  \frac{\partial u (t,x)}{\partial t}=-(-\Delta)^{\frac{\alpha}2}u (t,x)  + \dot{W}(t,x),\quad   t> 0,\,
   x\in\RR,
$$
 where  $-(-\Delta)^{\frac{\alpha}{2}}$ denotes the fractional Laplacian with power $\alpha\in (1,2]$, and the driving noise $\dot W$ is
    a centered  Gaussian field which is white in time and has the covariance of a fractional Brownian motion  with Hurst parameter $H\in\left(\frac {2-\alpha}2,1\right)$.    We  establish Chung's law  of the iterated logarithm for the solution  at $t=0$.

     \vskip0.3cm
 \noindent{\bf Keyword:} {Stochastic fractional heat equation; Chung's law of the iterated logarithm; Small ball probability.}
 \vskip0.3cm

\noindent {\bf MSC: } {60H15; 60G17; 60G22.}

\section{Introduction  }
Consider the following linear stochastic fractional  heat  equation (SFHE, for short):
\begin{equation}\label{SFHE}
\begin{cases}
  \frac{\partial  u(t,x)}{\partial t}=-(-\Delta)^{\frac{\alpha}2}u(t,x)+ \dot{W}(t,x),\quad   t>0,\,
   x\in\RR, \\
  u(0,x)\equiv 0.
\end{cases}
\end{equation}
Here,    $-(-\Delta)^{\frac{\alpha}2}$ denotes the fractional Laplacian   with $\alpha\in(1,2]$,
 $W(t,x)$ is a centered Gaussian field with the covariance given by
\begin{equation}
  \ee[W(t,x)W(s,y)]=\frac 12 \left(s\wedge t\right)\left( |x|^{2H}+|y|^{2H}-|x-y|^{2H} \right),
\end{equation}
with
\begin{align}\label{eq const H}
 \frac {2-\alpha}2<  H< 1.
\end{align}
That is, $W$ is a random fields, which is a standard Brownian motion in time and a fractional Brownian motion
(fBm, for short) with Hurst parameter $H$ in space.

According to \cite[Theorem 5.3, Remark 5.4]{B12},
the equation     \eqref{SFHE} has a random field solution if and only if   \eqref{eq const H} holds; See Section 2 for more precise information.  In the latter case,  the solution $\{u(t, x)\}_{t\ge0, x\in \mathbb R}$ can be written as
     \begin{equation}\label{u}
u(t, x)=  \int_0^t \int_{\mathbb{R}}G(t-s, x-y)  W(ds,dy), \ \ \text{a.s.}
\end{equation}
where     $G(t, x)$ is  the Green kernel associated with the operator $-(-\triangle)^{\frac{\alpha}{2}}$ is given by
    \begin{equation}\label{F-Green kernel}
    (\cF G(t,\cdot))(\xi)=e^{-t|\xi|^{\alpha}}, \ \ \xi\in{\RR},\ \ t> 0.
    \end{equation}

Using the idea of the pinned string process from Mueller and Tribe \cite{MT02}, it  can be shown that the solution to the linear stochastic (fractional) heat equation can be decomposed into the sum of two Gaussian random fields: one possessing stationary increments and the other exhibiting smooth sample paths. See  \cite{GSWX2025, HSWX2020, LN2009, KT2019, TX17, WZ2021}, as well as the monographs \cite{DS2026, K, Tudor23}. For example,  when  $\dot W$ is a space-time white noise (i.e., $H=1/2$),    Theorem 3.3 in \cite{K} tells us that for any fixed $x\in \mathbb R$,    the stochastic process $t\mapsto u(t,x)$  can be represented as the sum of a fBm with Hurst parameter $1/4$  and a stochastic  process with  $C^{\infty}(0,\infty)$-continuous trajectories.  Hence,    locally  $t\mapsto u(t,x)$  behaves as a fBm with Hurst parameter $1/4$, and it has the same   regularity  (such as  the H\"older continuity, the  local moduli of continuities,  Chung-type law  of the iterated logarithm) as a fBm with Hurst parameter $1/4$.

For the solution of the  equation \eqref{SFHE},  according to  \cite[Section 3.2]{GSWX2025},  we know that  for any fixed $x\in \mathbb R$, there exists a Gaussian process $\{T(t)\}_{t\ge0}$, which is  independent of $\{u(t,x)\}_{t\ge 0}$ and has a version that is infinitely differentiable on $(0, \infty)$, such that
\begin{align}\label{eq decom}
F:=\kappa^{-1}\left\{u(t,x)+T(t)\right\}_{t\ge0}
\end{align}
is a fBm with Hurst parameter $\theta$, defined  by
   \begin{equation}\label{eq theta}
   \theta:=\frac{1}{2}-\frac{1-H}{\alpha}.
\end{equation}
Here, the constant $\kappa$ is given by
   \begin{equation}\label{eq kappa}
  \kappa:=\left(\frac{ H\Gamma(2H)\Gamma(1-2\theta)\sin(\pi H)}{ \pi\alpha\theta}\right)^{\frac12}.
  \end{equation}
 The  decomposition of  \eqref{eq decom}, together with   Chung's LIL of the fBm (e.g.,  \cite[Section]{LS2001}, \cite{Tal96}),
implies that for  every fixed $t\in (0,\infty)$ and $x\in\RR$,
	\begin{equation}\label{eq Chung t>0}
		\liminf_{\varepsilon\downarrow0} \sup_{h\in[0,\varepsilon]}\frac{|u(t+h, x)-u(t, x)|}{\e^{\theta}(\log\log \e^{-1})^{-\theta}}
		 = \kappa \lambda^{\theta},\ \ \ a.s.,
	\end{equation}
 where  $\lambda$  is the small ball constant of a fractional Brownian motion with index $\theta$ (e.g. \cite[Theorem 6.9]{LS2001}).

   However, the argument in the proof of Chung's LIL of $u(t, x)$  at $t>0$ does not   work in the case of $t=0$ and the problem on Chung's LIL for $u(t, x)$  at $t=0$ was left open in \cite{GSWX2025}.   This effort is complicated by the fact that, near $t = 0$, the random process $T$ is not smooth; in fact, $T$ and $F$ are equally smooth locally near $t = 0$.     When $\alpha=2$ and $H=\frac12$,  Khoshnevisan,   Kim,  and  Mueller \cite{KKM2024} recently  proved that  the small-ball probability of the rougher process $F$ to dominate that of the smoother Gaussian process $T$, and they adopted a localization idea of Lee and Xiao \cite{LX2023} to establish   Chung's  LIL of $u(t,x)$ at $t=0$.

   The objective of this   paper is to establish Chung's LIL  at    $t=0$ for the solution $u(t, x)$  of the linear stochastic fractional heat equation \eqref{SFHE},    for any fixed $x\in \mathbb R$, with parameters    $\alpha\in (1, 2]$ and $H\in (1-\alpha/2, 1)$.

   \begin{theorem}\label{pr:Chung:u}
	For any fixed $x\in\RR$,
	\begin{equation}\label{pr:Chung}
		\liminf_{\varepsilon\downarrow0} \sup_{t\in[0,\varepsilon]}\frac{|u(t,x)|}{\e^{\theta}(\log\log \e^{-1})^{-\theta}}
		 = \kappa\lambda^{\theta},\ \ \ a.s.,
	\end{equation}
where   $\lambda$  is the small ball constant of a fractional Brownian motion with index $\theta$.
\end{theorem}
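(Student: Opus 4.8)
The plan is to reduce Theorem~\ref{pr:Chung:u} to a uniform small-ball estimate for the centered Gaussian process $t\mapsto u(t,x)$ on the shrinking intervals $[0,r]$, and then to run the classical Borel--Cantelli machinery for Chung-type laws, handling the dependence across dyadic scales by the localization device of Lee and Xiao \cite{LX2023} exactly as in \cite{GSWX2025,KKM2024}. A zero--one law (applied to $W$ restricted to shrinking time windows) makes the $\liminf$ in \eqref{pr:Chung} a.s. a deterministic constant, and since $u(0,x)=0$ with continuous paths it is nonnegative; so it suffices to bound it above and below by $\kappa\lambda^{\theta}$. Write $\phi(\ep):=\ep^{\theta}(\log\log\ep^{-1})^{-\theta}$ and recall that $\lambda$, the small-ball constant of a fractional Brownian motion of index $\theta$ (as in the theorem), is determined by $-\log\bp(\sup_{[0,1]}|F|\le\delta)=(1+o(1))\lambda\,\delta^{-1/\theta}$ as $\delta\to0$.

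\textbf{A small-ball estimate for $u$ near the origin.} I would first establish that, uniformly in $r\in(0,r_{0}]$,
\[
\log\bp\Big(\sup_{t\in[0,r]}|u(t,x)|\le\delta\Big)=-(1+o(1))\,\lambda\,\kappa^{1/\theta}\,(\delta/r^{\theta})^{-1/\theta}\qquad\text{as }\delta/r^{\theta}\to0.
\]
The lower bound on the probability is the easy half: using the decomposition $\kappa F=u(\cdot,x)+T$ with $u(\cdot,x)\perp T$ from \eqref{eq decom} and Anderson's inequality for the symmetric convex set $\{f:\sup_{[0,r]}|f|\le\delta\}$, one gets $\bp(\sup_{[0,r]}|u(\cdot,x)|\le\delta)\ge\bp(\sup_{[0,r]}|\kappa F|\le\delta)=\bp(\sup_{[0,1]}|F|\le\delta/(\kappa r^{\theta}))$, the last equality by the $\theta$-self-similarity of $F$. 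The upper bound on the probability is the delicate half; the crucial fact is that $T$, although \emph{not} smooth at $t=0$, is nonetheless negligible on the small-ball scale there, namely $-\log\bp(\sup_{t\in[0,r]}|T(t)|\le\delta)=o(\delta^{-1/\theta})$ as $\delta\to0$, uniformly in $r\le r_{0}$ --- in the language of \cite{KKM2024}, the small-ball probability of the rougher process $F$ dominates that of $T$. Granting this, take an independent copy $T'$ of $T$, so that $u(\cdot,x)+T'\overset{d}{=}\kappa F$; from $\{\sup|u(\cdot,x)|\le\delta\}\cap\{\sup|T'|\le\eta\delta\}\subseteq\{\sup|u(\cdot,x)+T'|\le(1+\eta)\delta\}$ and independence,
\[
\bp\Big(\sup_{[0,r]}|u(\cdot,x)|\le\delta\Big)\le\frac{\bp\big(\sup_{[0,r]}|\kappa F|\le(1+\eta)\delta\big)}{\bp\big(\sup_{[0,r]}|T'|\le\eta\delta\big)}\le\exp\!\Big(-(1+o(1))\lambda\kappa^{1/\theta}\big((1+\eta)\delta/r^{\theta}\big)^{-1/\theta}+o(\delta^{-1/\theta})\Big),
\]
and letting $\delta/r^{\theta}\to0$ and then $\eta\downarrow0$ matches the easy half.

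\textbf{From the small-ball estimate to Theorem~\ref{pr:Chung:u}.} For the lower bound in \eqref{pr:Chung}, fix $\rho\in(0,1)$ and $\gamma\in(0,\kappa\lambda^{\theta})$ and set $\delta_{n}:=(\kappa\lambda^{\theta}-\gamma)\phi(\rho^{n})$; since $\delta_{n}/\rho^{n\theta}=(\kappa\lambda^{\theta}-\gamma)(\log\log\rho^{-n})^{-\theta}\to0$ and $\lambda\kappa^{1/\theta}(\kappa\lambda^{\theta}-\gamma)^{-1/\theta}=(1-\gamma/(\kappa\lambda^{\theta}))^{-1/\theta}>1$, the estimate above gives $\bp(\sup_{[0,\rho^{n}]}|u(\cdot,x)|\le\delta_{n})\le(\log\rho^{-n})^{-(1+\beta)}$ for some $\beta=\beta(\gamma)>0$ and all large $n$, which is summable; by Borel--Cantelli, a.s. $\sup_{[0,\rho^{n}]}|u(\cdot,x)|>\delta_{n}$ for all large $n$, and for $\ep\in(\rho^{n+1},\rho^{n}]$ one has $\sup_{t\in[0,\ep]}|u(t,x)|\ge\sup_{t\in[0,\rho^{n+1}]}|u(t,x)|>\delta_{n+1}$ with $\delta_{n+1}/\phi(\ep)\to(\kappa\lambda^{\theta}-\gamma)\rho^{\theta}$; letting $\rho\uparrow1$ and $\gamma\downarrow0$ gives $\liminf\ge\kappa\lambda^{\theta}$ a.s. For the upper bound, put $\delta_{n}:=(\kappa\lambda^{\theta}+\gamma)\phi(\rho^{n})$; the easy (Anderson) half gives $\bp(\sup_{[0,\rho^{n}]}|u(\cdot,x)|\le\delta_{n})\ge\bp(\sup_{[0,1]}|F|\le\delta_{n}/(\kappa\rho^{n\theta}))\gtrsim(\log\rho^{-n})^{-(1-c\gamma)}$ for some $c>0$, which is not summable. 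Since the events $\{\sup_{[0,\rho^{n}]}|u(\cdot,x)|\le\delta_{n}\}$ are essentially nested and $F$ has long-range dependence, plain independence is unavailable; instead, following the localization of \cite{LX2023} as implemented in \cite{GSWX2025,KKM2024}, I would decorrelate these events along a sufficiently sparse subsequence using the strong local nondeterminism of $F$ together with the independence of $T$, and a conditional Borel--Cantelli argument, to conclude that $\sup_{[0,\rho^{n}]}|u(\cdot,x)|\le\delta_{n}$ infinitely often a.s.; hence $\liminf\le\kappa\lambda^{\theta}+\gamma$, and $\gamma\downarrow0$ completes the proof.

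\textbf{The main obstacle.} The heart of the matter --- and the main new point compared with the case $t>0$ handled in \cite{GSWX2025} --- is the small-ball upper bound above, equivalently the negligibility of $T$ near $t=0$. For $t>0$ this is automatic since $T$ is $C^{\infty}$, so that $\sup_{[t,t+\ep]}|T-T(t)|=O(\ep)=o(\ep^{\theta})$; at the origin $T$ is genuinely rough, and one must instead show that its small-ball exponent is of strictly smaller order than $\delta^{-1/\theta}$. I expect this to require a careful analysis of the covariance of $T$ near $0$ --- read off from the representation in \cite[Section 3.2]{GSWX2025} together with the Green kernel \eqref{F-Green kernel}--\eqref{u} --- exhibiting $T$ as slightly smoother than $\theta$-H\"older there (though not $C^{\infty}$), combined with a Gaussian small-ball lower bound of Kuelbs--Li type; making these estimates uniform in $r\le r_{0}$, and carrying out the decorrelation in the upper-bound step, are the remaining technical ingredients.
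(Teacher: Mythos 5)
Your skeleton is the right one (small-ball estimate + Borel--Cantelli for the lower bound; localization + second Borel--Cantelli for the upper bound), and your lower-bound argument is essentially the paper's (self-similarity from Proposition~\ref{solution}(c), the small-ball estimate, first Borel--Cantelli, monotonicity in $\rho$). But the upper bound --- the genuinely hard step --- is where your proposal diverges from what the paper actually does, and where it has a real gap.

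First, a point of emphasis. You identify ``the negligibility of $T$ near $t=0$,'' i.e.\ re-deriving a small-ball estimate for $u$ on $[0,1]$, as the heart of the matter. In this paper that estimate (Lemma~\ref{lem smb}) is simply imported from \cite[Theorem~4.3]{GSWX2025}; the paper's new contribution is entirely in the localization step. Moreover, your heuristic --- that $T$ is ``slightly smoother than $\theta$-H\"older'' near the origin, feeding a Kuelbs--Li lower bound --- is in tension with the paper's own remark that near $t=0$ the processes $T$ and $F$ are \emph{equally} smooth; the domination of small-ball probabilities proved in \cite{GSWX2025,KKM2024} is not a simple H\"older-regularity comparison.

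Second, and more importantly, your decorrelation mechanism does not match the one needed. You set up the upper bound with the geometric scale $\rho^{n}$ and then appeal, vaguely, to ``strong local nondeterminism of $F$ together with the independence of $T$'' and a conditional Borel--Cantelli argument. The paper's construction is concrete and different: it defines truncated Wiener integrals $u_{n}(t,x)=\int_{[t_{n+1},t)\times\RR}G(t-s,x-y)\,W(ds,dy)$ over disjoint time windows, so the family $\{u_{n}\}_{n}$ is \emph{independent by construction}, with no appeal to SLND of $F$; and it works on the super-exponentially sparse scale $t_{n}=\exp(-n^{1+\beta})$, not on $\rho^{n}$. This sparsity is essential: the approximation error $u-u_{n}$ on $[t_{n+1},t_{n}]$ has variance $\lesssim t_{n+1}^{2\theta}$, which (by $t_{n+1}/t_{n}\le\exp(-(1+\beta)n^{\beta})$) is $\exp(-2\theta(1+\beta)n^{\beta})$ times smaller than $\psi(t_{n})^{2}$, so $\sup_{[t_{n+1},t_{n}]}|u-u_{n}|/\psi(t_{n})\to0$ a.s.\ (Lemma~\ref{lem:Var(u-u_n)}). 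On your geometric scale $\rho^{n}$, $t_{n+1}/t_{n}=\rho$ is constant, the truncation error has variance comparable to $\psi(\rho^{n})^{2}$ (up to $(\log n)^{2\theta}$), and this comparison fails --- so the clean independence route is simply unavailable at that spacing. Your proposal neither commits to a scale sparse enough for the truncation to be negligible nor spells out an alternative SLND-based decorrelation for $u$ (not $F$; note it is $u$, not $F$, that is independent of $T$). In addition, the localization requires three auxiliary estimates you do not address: controlling $\sup_{[0,t_{n+1}]}|u|/\psi(t_{n})$ (Lemma~\ref{lem:P(u:n+1)}), transferring the small-ball rate from $u$ to $u_{n}$ on $[t_{n+1},t_{n}]$ (Lemmas~\ref{lem:small-ball-u}--\ref{lem:small-ball-u_n}), and choosing $\gamma=\kappa(1+2\beta)^{\theta}\lambda^{\theta}$ so that $\sum_{n}\bp(\sup_{[t_{n+1},t_{n}]}|u_{n}|\le\gamma\psi(t_{n}))=\infty$ before sending $\beta\downarrow0$. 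As written, your upper-bound argument would not go through without these ingredients.
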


 The lower bound in \eqref{pr:Chung} is derived from a small ball probability result (Lemma \ref{lem smb}), following the classical method of Monrad and Rootz\'en \cite{MR1995} and Talagrand \cite{Tal96}.  The upper bound, however, requires a more intricate analysis. To establish this upper bound, we adapt the localization approach introduced by Lee and Xiao \cite{LX2023} and Khoshnevisan et al. \cite{KKM2024}.

The paper is structured as follows. In Section 2, we  recall some properties of the solution,  and  Borell's  inequality  and  Dudley's metric entropy theorem.
  In Section 3, we prove Theorem \ref{pr:Chung:u} by analyzing the qualities of the approximation errors.

 \section{Preliminaries}\label{lemmas}
 \subsection{Properties of the solution}
 Let us recall  some notations from \cite{HHLNT2017}.
    Let  $ \cD=\cD(\RR) $ be  the space of real-valued infinitely differentiable  functions with compact support on $\mathbb{R}$.
    The Fourier transform of a function $f\in\cD$ is defined as
     $$
      \cF f(\xi):=\int_{\RR} e^{-i\xi x}f(x) dx.
    $$
 Let $\mathcal{D}(\mathbb{R}_{+}\times \mathbb{R})$  denote   the space of real-valued infinitely differentiable functions with compact support in $\mathbb{R}_{+}\times \mathbb{R}$ and let $W$
 be a zero-mean Gaussian family $\{W(\phi), \phi\in\cD(\RR_{+}\times \RR)\}$,  whose covariance structure is given by
     \begin{equation}\label{eq H product}
     \EE\big[ W(\phi)W(\psi)\big]=C_{H}\int_{\RR_{+}\times \RR} \cF \phi(s,\xi) \overline{\cF \psi(s,\xi)}\cdot |\xi|^{1-2H}d\xi ds,
     \end{equation}
     where $H\in(0,1)$, $C_{H}=\frac1{2\pi}\Gamma(2H+1)\sin(\pi H)$
     and $\cF \phi(s,\xi)$ is the Fourier transform of $\phi(s,x)$ with respect to the spatial variable $x$.
    Let $\mathcal H$  be the completion of $\mathcal D(\RR_+\times\RR)$ with the inner product
    \[
    \langle\phi,\psi\rangle_{\mathcal H}:=\EE\big[ W(\phi)W(\psi)\big].
    \]
     The mapping $\varphi\mapsto W(\varphi)$ can be extended to all $\varphi\in\mathcal H$.

       According to  Balan   \cite[Remark 5.4]{B12} and \cite[Proposition 2.2]{LQW2025}, we have the following results about   the solution to Equation  \eqref{SFHE}.
        \begin{proposition}\label{solution}   Assume $\alpha\in (1,2]$ and $\frac{2-\alpha}{2}<H<1$. Then, the equation \eqref{SFHE}  admits  a unique  solution $\{u(t,x); (t,x)\in \mathbb R_+\times \mathbb R\}$. Moreover, the following items hold.
\begin{itemize}
\item[(a)] For any $t\ge0$ and $x\in \mathbb R$,
\beq\label{u-L2}
\|u(t,x)\|_{L^2(\Omega)} =\sqrt{c_{2,1}}t^{\theta},
\nneq
where $c_{2,1}=\frac{C_{H}} {2H+\alpha-2}  2^{ \frac{2H+\alpha-2}{\alpha} } \Gamma\left(\frac{2-2H}{\alpha}\right)$.
\item[(b)]   For any fixed $x\in\mathbb R$, there exists $c_{2,2}>0$  such that
for any  $t, s\in \RR_+$,
\begin{equation}\label{equ-metric-d1}
\|u(t,x)-u(s,x)\|_{L^2(\Omega)}\le c_{2,2}|t-s|^{\theta}.
\end{equation}

\item[(c)] For any fixed $x\in \mathbb R$, the process $\{u(t, x)\}_{t\ge0}$ is a  Gaussian self-similar   process with index $\theta$ given by \eqref{eq theta}.
\end{itemize}
      \end{proposition}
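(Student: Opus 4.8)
The plan is to view $\{u(t,x)\}_{t\ge 0}$ as a family of Wiener integrals against the Gaussian noise $W$ and to compute everything through the spectral isometry of the Hilbert space $\cH$. From \eqref{F-Green kernel} the spatial Fourier transform of the kernel is $\cF G(\tau,x-\cdot)(\xi)=e^{-i\xi x}e^{-\tau|\xi|^{\alpha}}$, so, since $W$ is white in time, the isometry takes the form
\[
\EE\big[W(\phi)W(\psi)\big]=C_H\int_{\RR_+}\!\int_{\RR}\cF\phi(r,\xi)\,\overline{\cF\psi(r,\xi)}\,|\xi|^{1-2H}\,d\xi\,dr ,
\]
and $u(t,x)$ of \eqref{u} is well defined iff $G(t-\cdot,x-\cdot)\1_{[0,t]}\in\cH$. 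Integrating $e^{-2(t-r)|\xi|^{\alpha}}$ over $r\in[0,t]$ gives $(1-e^{-2t|\xi|^{\alpha}})/(2|\xi|^{\alpha})$, so this membership is equivalent to $\int_{\RR}(1-e^{-2t|\xi|^{\alpha}})\,|\xi|^{1-2H-\alpha}\,d\xi<\infty$; the integrand is $O(|\xi|^{1-2H})$ near the origin (integrable as $H<1$) and $O(|\xi|^{1-2H-\alpha})$ near infinity (integrable iff $2H>2-\alpha$), which is exactly \eqref{eq const H}. Existence and uniqueness of the random field solution then follow from Walsh--Dalang theory as in \cite{B12}.

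For (a), the isometry and Fubini give
\[
\|u(t,x)\|_{L^2(\Omega)}^2=C_H\int_0^t\!\int_{\RR}e^{-2(t-r)|\xi|^{\alpha}}|\xi|^{1-2H}\,d\xi\,dr=\frac{C_H}{2}\int_{\RR}\frac{1-e^{-2t|\xi|^{\alpha}}}{|\xi|^{\alpha+2H-1}}\,d\xi .
\]
The substitution $\eta=2t|\xi|^{\alpha}$ factors out $t^{(\alpha+2H-2)/\alpha}=t^{2\theta}$ and leaves $\int_0^\infty(1-e^{-\eta})\eta^{-1-2\theta}\,d\eta$, which by one integration by parts equals $\Gamma(1-2\theta)/(2\theta)$ (note $2\theta\in(0,1)$ since $1-\alpha/2<H<1$, so the boundary terms vanish). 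Collecting the factors $2^{2\theta}$ and $1/\alpha$ and using $\alpha\cdot 2\theta=2H+\alpha-2$ and $1-2\theta=(2-2H)/\alpha$ yields exactly $\|u(t,x)\|_{L^2(\Omega)}^2=c_{2,1}t^{2\theta}$.

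For (b), fix $s<t$ and decompose
\[
u(t,x)-u(s,x)=\int_0^s\!\int_{\RR}\big[G(t-r,x-y)-G(s-r,x-y)\big]W(dr,dy)+\int_s^t\!\int_{\RR}G(t-r,x-y)\,W(dr,dy),
\]
two integrals over disjoint time intervals, hence orthogonal in $L^2(\Omega)$ by whiteness of $W$ in time. By the computation in (a) (after the shift $r\mapsto r-s$) the second term has variance $c_{2,1}(t-s)^{2\theta}$. The first term has variance $C_H\int_0^s\!\int_{\RR}e^{-2(s-r)|\xi|^{\alpha}}\big(1-e^{-(t-s)|\xi|^{\alpha}}\big)^2|\xi|^{1-2H}\,d\xi\,dr$; bounding $\int_0^se^{-2(s-r)|\xi|^{\alpha}}\,dr\le 1/(2|\xi|^{\alpha})$ and substituting $\eta=(t-s)|\xi|^{\alpha}$ gives the bound $C\,(t-s)^{2\theta}\int_0^\infty(1-e^{-\eta})^2\eta^{-2+(2-2H)/\alpha}\,d\eta$, and this last integral is finite since $(2-2H)/\alpha\in(0,1)$. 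Adding the two contributions yields $\|u(t,x)-u(s,x)\|_{L^2(\Omega)}^2\le c_{2,2}^2|t-s|^{2\theta}$.

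Finally, for (c), Gaussianity of $t\mapsto u(t,x)$ is immediate since each $u(t,x)$ is a Wiener integral against a Gaussian noise; by Gaussianity, self-similarity of index $\theta$ reduces to the covariance identity $\EE[u(ct,x)u(cs,x)]=c^{2\theta}\,\EE[u(t,x)u(s,x)]$ for every $c>0$. Writing, for $s\le t$, $\EE[u(t,x)u(s,x)]=C_H\int_0^s\!\int_{\RR}e^{-(t+s-2r)|\xi|^{\alpha}}|\xi|^{1-2H}\,d\xi\,dr$ and applying the change of variables $r=c\rho$, $\xi=c^{-1/\alpha}\zeta$ produces the factor $c^{\,1-1/\alpha-(1-2H)/\alpha}=c^{\,1-(2-2H)/\alpha}=c^{2\theta}$, which completes the argument. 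The only points demanding care are the justification that $G(t-\cdot,x-\cdot)\1_{[0,t]}\in\cH$ (Dalang's condition, handled in the first paragraph) together with the Fubini and dominated-convergence interchanges in the explicit integrals; everything else is bookkeeping, the one minor shortcut being the observation that the ``recent'' part in (b) equals $c_{2,1}(t-s)^{2\theta}$ on the nose.
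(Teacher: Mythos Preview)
Your proof is correct. For part (c) your argument is essentially identical to the paper's: both express the covariance through the spectral isometry \eqref{eq H product} as $C_H\int_0^s\int_{\RR}e^{-(t+s-2r)|\xi|^\alpha}|\xi|^{1-2H}\,d\xi\,dr$ and then apply the change of variables $(r,\xi)\mapsto(c\rho,c^{-1/\alpha}\zeta)$ to extract the factor $c^{2\theta}$.

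For parts (a) and (b), and for existence, the paper does not give an argument at all; it simply cites \cite[Proposition 2.2, Lemma 2.1]{LQW2025} and \cite[Remark 5.4]{B12}. Your self-contained treatment --- the explicit verification of Dalang's condition, the Gamma-function reduction via $\int_0^\infty(1-e^{-\eta})\eta^{-1-2\theta}\,d\eta=\Gamma(1-2\theta)/(2\theta)$ in (a), and the orthogonal decomposition into ``recent'' and ``past'' pieces with the substitution $\eta=(t-s)|\xi|^\alpha$ in (b) --- therefore goes further than the paper itself does here, and fills in exactly the details one would find by chasing those references.
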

\begin{proof}   The results in (a) and (b) are taken from    \cite[Proposition 2.2,  Lemma 2.1]{LQW2025}. Now, we give the proof for (c).
 For any fixed $x\in \mathbb R$, $\rho>0$ and  $0<s<t$,  by   \eqref{eq H product} and the change of variables, we have  \begin{equation}\label{cov:u}
\begin{split}
&\ee \left[\rho^{-\theta}u\left(\rho t,x\right)
           \rho^{-\theta}u\left(\rho s,x\right)\right]\\
=&\, C_{H} \rho^{-2\theta} \int_0^{\rho s}\int_{\RR}
     \mathcal F G\left(\rho t-r, x-\cdot\right)(\xi)
     \overline{\mathcal F G\left(\rho s-r, x-\cdot\right)(\xi)}
     |\xi|^{1-2H}d\xi dr\\
=&\, C_{H}\rho^{-2\theta}
     \int_0^{\rho s}\int_{\RR}
     e^{-(\rho t+\rho s-2r)|\xi|^\alpha}|\xi|^{1-2H}d\xi dr\\
=&\, C_{H}\int_0^{s}\int_{\RR} e^{-(t+s-2r)|\xi|^\alpha}|\xi|^{1-2H}d\xi dr\\
=&\, \ee\left[u(t, x)u(s, x)\right].
 \end{split}
 \end{equation}
 The proof is complete.
\end{proof}

 Let us recall   a small ball estimate result  for the process $\{u(t, x)\}_{t\ge0}$ for any fixed $x\in \mathbb R$.
 \begin{lemma} \cite[Theorem 4.3]{GSWX2025}\label{lem smb} For every  fixed $x\in\RR$,
\begin{equation}\label{pr:u}
\lim\limits_{\varepsilon\downarrow0}\varepsilon^{\frac1\theta}\log
\bp\left(\sup_{t\in[0,1]}|u(t,x)|\le\varepsilon\right)
	=-\kappa^{\frac1\theta}\lambda,
\end{equation}
where $\lambda$  is the small ball constant of a fractional Brownian motion with index $\theta$ .
     \end{lemma}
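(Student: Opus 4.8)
\textbf{Plan for proving Theorem~\ref{pr:Chung:u}.}
The target is the exact constant $\kappa\lambda^\theta$ in the Chung-type liminf, where $\lambda$ is the fBm small-ball constant. The lower and upper bounds are handled by different mechanisms, and I would organize the proof accordingly. For the \emph{lower bound} $\liminf \ge \kappa\lambda^\theta$, the plan is to run the classical Monrad--Rootz\'en/Talagrand scheme directly on the process $u(\cdot,x)$ using only Lemma~\ref{lem smb} and the self-similarity from Proposition~\ref{solution}(c). Fix $\eta>0$; along the geometric sequence $\e_n = \gamma^{-n}$ with $\gamma>1$, define the events $A_n = \{\sup_{t\in[0,\e_n]}|u(t,x)| \le (1-\eta)\kappa\lambda^\theta \e_n^\theta (\log\log \e_n^{-1})^{-\theta}\}$. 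By self-similarity, $\sup_{t\in[0,\e_n]}|u(t,x)| \overset{d}{=} \e_n^\theta \sup_{t\in[0,1]}|u(t,x)|$, so $\bp(A_n) = \bp(\sup_{[0,1]}|u| \le (1-\eta)\kappa\lambda^\theta (\log\log\e_n^{-1})^{-\theta})$. Plugging the prefactor into Lemma~\ref{lem smb} gives $\log\bp(A_n) \sim -\kappa^{1/\theta}\lambda \cdot (1-\eta)^{-1/\theta}\kappa^{-1/\theta}\lambda^{-1}\log\log\e_n^{-1} = -(1-\eta)^{-1/\theta}\log\log\e_n^{-1}$, hence $\bp(A_n)$ decays like $(n\log\gamma)^{-(1-\eta)^{-1/\theta}}$, whose exponent exceeds $1$; so $\sum_n \bp(A_n) < \infty$ and Borel--Cantelli rules out $A_n$ eventually. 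A standard interpolation between the points $\e_n$ (controlling the sup over $[0,\e]$ for $\e\in[\e_{n+1},\e_n]$ by the sup over $[0,\e_n]$ and adjusting $\gamma\downarrow 1$, $\eta\downarrow 0$) upgrades this to the liminf lower bound.

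For the \emph{upper bound} $\liminf \le \kappa\lambda^\theta$, the obstacle is the usual one for Chung's LIL: the events $\{\sup_{[0,\e_n]}|u|$ small$\}$ are not independent across scales, so one needs the correct form of independence. The plan is to exploit the decomposition \eqref{eq decom}: $u(t,x) = \kappa F(t) - T(t)$ with $F$ a fBm of index $\theta$ and $T$ independent of $u(\cdot,x)$, smooth on $(0,\infty)$ but \emph{not} smooth at $t=0$. Following Lee--Xiao \cite{LX2023} and Khoshnevisan--Kim--Mueller \cite{KKM2024}, I would localize: along a fast sequence $\e_n = \exp(-n^{\beta})$ (with $\beta>1$ chosen so that the windows $[\e_{n+1},\e_n]$ become asymptotically independent for the fBm, via a decoupling/Gaussian-correlation argument on the increments of $F$ over well-separated scales), consider the events $B_n = \{\sup_{t\in[\e_{n+1},\e_n]}|u(t,x)| \le (1+\eta)\kappa\lambda^\theta \e_n^\theta(\log\log\e_n^{-1})^{-\theta}\}$. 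The key estimate is that $\bp(B_n)$ is bounded below by (a constant times) the small-ball probability of $F$ at the corresponding scale, which by the fBm small-ball lemma and the choice of window is $\gtrsim \exp(-(1+\eta)^{-1/\theta}\log\log\e_n^{-1}) \asymp (n^\beta)^{-(1+\eta)^{-1/\theta}}$; since this exponent is $<1$, $\sum_n \bp(B_n) = \infty$, and after establishing asymptotic independence (so that a Borel--Cantelli-type second-moment or Kochen--Stone argument applies) one concludes $B_n$ occurs infinitely often, giving the liminf upper bound. Letting $\eta\downarrow0$ closes the gap.

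The step I expect to be the main obstacle is controlling the contribution of $T$ near $t=0$ in the upper bound. Unlike the $t>0$ case, $T$ is only as regular as $F$ at the origin, so one cannot simply absorb $\sup_{[\e_{n+1},\e_n]}|T|$ into a lower-order term. The resolution — following \cite{KKM2024} — is to show that the small-ball probability of the rougher process $\kappa F$ \emph{dominates}: more precisely, that $\bp(\sup|\kappa F - T| \le r) \gtrsim \bp(\sup|\kappa F| \le r)$ up to constants, using the independence of $T$ and $u(\cdot,x)$ together with Anderson's inequality (a zero-mean Gaussian shift only increases small-ball probability) or a conditioning argument on $T$. Quantifying this domination uniformly along the scales $\e_n$, and simultaneously getting the sharp constant $\kappa\lambda^\theta$ rather than just the right order, is the delicate part; it requires that the self-similarity of $u(\cdot,x)$ and the fBm small-ball asymptotics of Lemma~\ref{lem smb} interlock precisely, and that the window length $\e_n - \e_{n+1}$ is large enough relative to $\e_n$ to carry the full fBm small-ball cost while still permitting decoupling across $n$.
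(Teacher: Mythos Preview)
Your lower-bound argument is essentially the paper's: geometric sequence, self-similarity to reduce to Lemma~\ref{lem smb}, Borel--Cantelli, then let the ratio tend to $1$. Nothing to add there.

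The upper bound is where your plan diverges from the paper, and where it has a genuine gap. You propose to work through the decomposition $u=\kappa F-T$ of \eqref{eq decom}, use Anderson's inequality to compare small-ball probabilities of $u$ and $\kappa F$, and then obtain \emph{asymptotic} independence of the fBm across well-separated scales via a Kochen--Stone/second-moment argument. This route is not what the paper does, and the paper in fact emphasizes that the decomposition \eqref{eq decom} is precisely what fails at $t=0$ (since $T$ is as rough as $F$ there). Your Anderson step is also somewhat circular: since $T\perp u$ and $\kappa F=u+T$, conditioning on $T$ and applying Anderson gives $\bp(\sup|\kappa F|\le r)\le \bp(\sup|u|\le r)$, which is fine as an inequality but brings you right back to small-ball estimates for $u$ itself --- it does not produce the independence you need across $n$. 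And your proposed decoupling of $\{B_n\}$ is left as a black box; for fBm this is already delicate, and for $u$ (which is what $B_n$ is defined in terms of) it is not established anywhere in the plan.

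The paper's upper bound avoids the decomposition \eqref{eq decom} entirely and exploits instead the \emph{white-in-time} structure of $W$. It defines the truncations
\[
u_n(t,x)=\int_{[t_{n+1},t)\times\RR} G(t-s,x-y)\,W(ds,dy),\qquad t\in[t_{n+1},t_n],
\]
with $t_n=\exp(-n^{1+\beta})$. Because the stochastic integrals are over disjoint time slabs, the families $\{u_n(t,x):t\in[t_{n+1},t_n]\}$ are \emph{genuinely independent} across $n$ --- no correlation estimates, no Kochen--Stone. The work then reduces to three concrete estimates, each proved by Borell's inequality plus Dudley's entropy bound and the moment formulas in Proposition~\ref{solution}: (i) $\sup_{[0,t_{n+1}]}|u|=o(\psi(t_n))$ a.s.\ (Lemma~\ref{lem:P(u:n+1)}); (ii) $\sup_{[t_{n+1},t_n]}|u-u_n|=o(\psi(t_n))$ a.s.\ (Lemma~\ref{lem:Var(u-u_n)}); and (iii) the small-ball asymptotics for $u_n$ on $[t_{n+1},t_n]$ match those for $u$ on $[0,t_n]$ up to a $(1+\beta)$ factor (Lemmas~\ref{lem:small-ball-u} and~\ref{lem:small-ball-u_n}). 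Choosing $\gamma=\kappa(1+2\beta)^\theta\lambda^\theta$ makes $\sum_n\bp(\sup_{[t_{n+1},t_n]}|u_n|\le\gamma\psi(t_n))=\infty$, and the independent Borel--Cantelli lemma finishes the job; letting $\beta\downarrow0$ recovers the sharp constant.

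In short: the key idea you are missing is that the localization of \cite{KKM2024,LX2023}, in this setting, is a localization of the \emph{stochastic integral in the time variable of the noise}, not a manipulation of the fBm-plus-smooth decomposition. This is what converts asymptotic independence (hard) into exact independence (free), and it is what makes the argument go through with the sharp constant.
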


 \subsection{Borell's  inequality  and  Dudley's metric entropy theorem}

 For a given Gaussian process $\{X(t)\}_{t \in \mathbb{T}}$, let $d_X$ be the canonical metric defined by
\[
d_X(s, t) := \|X(t) - X(s)\|_{L^2(\Omega)} = \left( \mathbb{E}[|X(t) - X(s)|^2] \right)^{1/2}, \quad s, t \in \mathbb{T}.
\]
Let $N(\mathbb{T}, d_X; \varepsilon)$ denote the smallest number of open $d_X$-balls of radius $\varepsilon$ required to cover $\mathbb{T}$. The $d_X$-diameter of $\mathbb{T}$ is defined as $$D_X(\mathbb{T}) := \sup\left\{d_X(s, t) : s, t \in \mathbb{T}\right\}.$$

 \begin{lemma} {\rm (Borell's  inequality,  \cite[Theorem 2.1]{adler})}\label{Borell}
Let $\{X_t,t\in \mathbb T\}$  be a centered separable Gaussian process on some   index set
$\mathbb T$
with almost surely bounded sample paths. Then $\EE\left[\sup_{t\in  \mathbb T} X_t\right]<\infty$, and for all $\lambda>0$,
	\begin{equation}\label{lem3}
	\bp\left(\lt|\sup_{t\in \mathbb T} X_t-\EE\left[\sup_{t\in \mathbb T} X_t\right]\rt|>\lambda\right)\leq 2\exp\lc-\frac {\lambda^2}{2\sigma_{\mathbb T}^2}\rc,
	\end{equation}
	where $\sigma_{\mathbb T}^2:=\sup_{t\in \mathbb T}\EE\left[X_t^2\right]$.
 \end{lemma}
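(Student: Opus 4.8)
The plan is to reduce the statement to a finite-dimensional Gaussian concentration inequality for a Lipschitz function and then pass to the limit. First I would use the separability of $\{X_t\}_{t\in\mathbb{T}}$ to fix a countable set $\{t_1,t_2,\dots\}\subset\mathbb{T}$, dense for the canonical metric $d_X$, along which $\sup_{t\in\mathbb{T}}X_t=\sup_{n\ge1}X_{t_n}$ almost surely; with $M_N:=\max_{1\le n\le N}X_{t_n}$ one has $M_N\uparrow M:=\sup_{t\in\mathbb{T}}X_t$ a.s. It then suffices to prove the concentration bound for each $M_N$ with constants independent of $N$, and to let $N\to\infty$. One may assume $\sigma_{\mathbb{T}}^2:=\sup_{t\in\mathbb{T}}\mathbb{E}[X_t^2]<\infty$, since otherwise \eqref{lem3} is vacuous (in fact a.s.\ boundedness of the paths already forces $\sigma_{\mathbb{T}}<\infty$).

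For fixed $N$, I would realize the centred Gaussian vector $(X_{t_1},\dots,X_{t_N})$ as $A\xi$ with $\xi\sim\mathcal{N}(0,I_N)$ and $AA^{\top}$ its covariance matrix, so that $M_N=f(\xi)$ with $f(z):=\max_{1\le n\le N}(Az)_n$. From $\max_n a_n-\max_n b_n\le\max_n(a_n-b_n)$ and Cauchy--Schwarz, $f$ is Lipschitz with constant $\max_{1\le n\le N}\lVert A_n\rVert_2$, where $A_n$ is the $n$-th row of $A$, and $\lVert A_n\rVert_2^2=(AA^{\top})_{nn}=\mathbb{E}[X_{t_n}^2]\le\sigma_{\mathbb{T}}^2$; hence $f$ is $\sigma_{\mathbb{T}}$-Lipschitz. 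The engine is then the Gaussian concentration estimate for Lipschitz functions: if $g:\mathbb{R}^N\to\mathbb{R}$ is $L$-Lipschitz and $\xi\sim\mathcal{N}(0,I_N)$, then $\mathbb{E}[e^{\vartheta(g(\xi)-\mathbb{E}g(\xi))}]\le e^{L^2\vartheta^2/2}$ for every $\vartheta\in\mathbb{R}$; this comes from Herbst's argument applied to the Gaussian logarithmic Sobolev inequality $\operatorname{Ent}_{\gamma_N}(h^2)\le 2\mathbb{E}_{\gamma_N}[\lvert\nabla h\rvert^2]$ with $h=e^{\vartheta g/2}$ (after the routine mollification making $g$ smooth), or equally from the Gaussian isoperimetric inequality, both yielding the sharp constant. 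Applying it to $g=f$, $L=\sigma_{\mathbb{T}}$ and optimising $\vartheta$ in the Chernoff bound gives, uniformly in $N$,
\[
\mathbb{P}\big(\lvert M_N-\mathbb{E}[M_N]\rvert>\lambda\big)\le 2\exp\!\Big(-\frac{\lambda^2}{2\sigma_{\mathbb{T}}^2}\Big),\qquad\lambda>0.
\]

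It remains to show $\mathbb{E}[M]<\infty$ and to pass to the limit. For finiteness I would use the medians $m_N$ of $M_N$: since $M_N\uparrow M<\infty$ a.s., if $m_{N_k}\uparrow\infty$ along some subsequence then $\mathbb{P}(M\ge m_{N_k})\ge\mathbb{P}(M_{N_k}\ge m_{N_k})\ge\tfrac12$ would give $\mathbb{P}(M=\infty)\ge\tfrac12$, a contradiction, so $m:=\sup_N m_N<\infty$. The one-sided isoperimetric bound $\mathbb{P}(M_N>m_N+\lambda)\le\tfrac12 e^{-\lambda^2/(2\sigma_{\mathbb{T}}^2)}$ together with $m_N\le m$ bounds $\mathbb{E}[M_N^{+}]$ uniformly in $N$, while $M_N\ge X_{t_1}$ bounds $\mathbb{E}[M_N^{-}]$ uniformly; hence $\sup_N\mathbb{E}\lvert M_N\rvert<\infty$, and monotone and dominated convergence give $\mathbb{E}[M]=\lim_N\mathbb{E}[M_N]\in\mathbb{R}$. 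Since moreover $M_N\to M$ a.s.\ and $\mathbb{E}[M_N]\to\mathbb{E}[M]$, Fatou's lemma lets me pass to the limit in the displayed bound, which yields \eqref{lem3}.

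I expect the genuine obstacle to be the engine itself --- the Gaussian concentration inequality for Lipschitz functions with the sharp exponent $\lambda^2/(2L^2)$, equivalently the Gaussian isoperimetric inequality or the Gaussian logarithmic Sobolev inequality feeding Herbst's argument --- which is a nontrivial (though entirely classical) input; this is presumably why the lemma is quoted with a reference rather than proved in place. The remaining delicate points are the reduction to a countable index set via separability while keeping the Lipschitz constant uniformly bounded by $\sigma_{\mathbb{T}}$, and the essential use of the a.s.-boundedness hypothesis to ensure $\mathbb{E}[\sup_{t\in\mathbb{T}}X_t]<\infty$ in the first place.
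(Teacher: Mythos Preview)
Your sketch is correct and follows the standard route to Borell's inequality: reduce to finite-dimensional maxima via separability, apply Gaussian concentration for Lipschitz functions (with the sharp constant coming from the log-Sobolev or isoperimetric inequality), and pass to the limit using a.s.\ boundedness to control the medians and hence the means. You also correctly anticipated the situation in the paper: the lemma is not proved there at all but simply quoted from \cite[Theorem 2.1]{adler}, so there is no ``paper's own proof'' to compare against. Your outline supplies exactly the argument that the citation stands in for.
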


\begin{lemma}{\rm (Dudley's metric entropy theorem, \cite[Theorem 6.1]{L1996})}\label{Dudley}
There exists a positive constant $c_{2,3}$ such that for all Gaussian processes $\left\{X(t)\right\}_{t\in \mathbb T}$,
$$
\EE\left[\sup_{t\in \mathbb T} X(t)\right]\leq c_{2,3} \int_0^{D_X(\mathbb T)}
\sqrt{\log N\left( \mathbb T,d_X;\varepsilon\right)}d\varepsilon.
$$
\end{lemma}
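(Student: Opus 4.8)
The plan is to prove the estimate by the classical dyadic chaining argument, whose only analytic ingredient is the elementary Gaussian maximal inequality: if $Y_1,\dots,Y_n$ are centered Gaussian random variables with $\|Y_i\|_{L^2(\Omega)}\le\sigma$, then $\EE\left[\max_{1\le i\le n}Y_i\right]\le\sigma\sqrt{2\log n}$, which follows from $\EE\left[e^{\lambda\max_i Y_i}\right]\le n\,e^{\lambda^2\sigma^2/2}$ and optimization over $\lambda>0$. Two preliminary reductions are convenient. We may assume $X$ is centered, since replacing $X(t)$ by $X(t)-\EE[X(t)]$ changes neither $d_X$ nor the right-hand side while, for a centered process, it does not change the left-hand side. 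We may also assume $\mathbb T$ is finite: for finite $\mathbb S\subset\mathbb T$ one has $N(\mathbb S,d_X;\cdot)\le N(\mathbb T,d_X;\cdot)$ and $D_X(\mathbb S)\le D_X(\mathbb T)$, while separability of $\{X(t)\}$ gives $\EE[\sup_{t\in\mathbb T}X(t)]=\sup\{\EE[\sup_{t\in\mathbb S}X(t)]:\mathbb S\subset\mathbb T\text{ finite}\}$; moreover, if the integral on the right is infinite there is nothing to prove.

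Set $D:=D_X(\mathbb T)$ and $\e_j:=2^{-j}D$ for $j\ge 0$. I would fix an arbitrary $t_0\in\mathbb T$, put $T_0:=\{t_0\}$ (so $|T_0|=:N_0=1$), and for $j\ge1$ fix a minimal $\e_j$-net $T_j\subset\mathbb T$ with respect to $d_X$, so $|T_j|=N(\mathbb T,d_X;\e_j)=:N_j$. Let $\pi_j\colon\mathbb T\to T_j$ map each $t$ to a nearest point of $T_j$, so that $d_X(t,\pi_j(t))\le\e_j$ for $j\ge1$ and $\pi_0\equiv t_0$; since $\mathbb T$ is finite, $\pi_j=\mathrm{id}$ for all $j$ large. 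Then the telescoping identity
\[
X(t)-X(t_0)=\sum_{j\ge1}\bigl(X(\pi_j(t))-X(\pi_{j-1}(t))\bigr)
\]
holds with only finitely many nonzero terms. Here $d_X(\pi_j(t),\pi_{j-1}(t))\le\e_j+\e_{j-1}=3\e_j$ for $j\ge2$, and $d_X(\pi_1(t),\pi_0(t))\le\e_1+D= 3\e_1$ using the diameter bound; also, as $t$ ranges over $\mathbb T$, the pair $(\pi_j(t),\pi_{j-1}(t))$ takes at most $N_jN_{j-1}\le N_j^2$ values. Applying the Gaussian maximal inequality at each level $j\ge1$ to the increments $X(\pi_j(t))-X(\pi_{j-1}(t))$ (centered Gaussian, $L^2$-norm $\le 3\e_j$, at most $N_j^2$ of them) and summing,
\[
\EE\left[\sup_{t\in\mathbb T}X(t)\right]=\EE\left[\sup_{t\in\mathbb T}\bigl(X(t)-X(t_0)\bigr)\right]\le\sum_{j\ge1}3\e_j\sqrt{2\log(N_j^2)}=6\sum_{j\ge1}\e_j\sqrt{\log N_j}.
\]

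To finish, I would convert the series to the stated integral: since $\e\mapsto N(\mathbb T,d_X;\e)$ is nonincreasing, $\sqrt{\log N_j}\le\sqrt{\log N(\mathbb T,d_X;\e)}$ for $\e\in[\e_{j+1},\e_j]$, whereas $\e_j=2(\e_j-\e_{j+1})$, so
\[
\sum_{j\ge1}\e_j\sqrt{\log N_j}\le 2\sum_{j\ge1}\int_{\e_{j+1}}^{\e_j}\sqrt{\log N(\mathbb T,d_X;\e)}\,d\e\le 2\int_0^{D}\sqrt{\log N(\mathbb T,d_X;\e)}\,d\e,
\]
which yields the claimed inequality with $c_{2,3}=12$ (any absolute constant does). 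I expect the only genuinely soft point to be the reduction to finite index sets — the identity $\EE[\sup_{t\in\mathbb T}X(t)]=\sup_{\mathbb S}\EE[\sup_{t\in\mathbb S}X(t)]$ — which is where separability of the process enters; everything else is the routine bookkeeping of dyadic chaining, and the non-optimal numerical factors (the $3$, the geometric ratio $2$) affect only the value of $c_{2,3}$.
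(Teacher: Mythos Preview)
Your chaining argument is the standard proof of Dudley's entropy bound and is carried out correctly: the Gaussian maximal inequality, the dyadic nets, the telescoping sum, and the conversion from the series $\sum_j \e_j\sqrt{\log N_j}$ to the entropy integral are all clean. Note, however, that the paper does not prove this lemma at all --- it is quoted as a classical result from Ledoux \cite[Theorem 6.1]{L1996} and used as a black box in the subsequent estimates --- so there is no ``paper's proof'' to compare against; you have simply supplied the argument the paper omits.

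One small caveat on your reduction to the centered case: the sentence as written is slightly circular. Replacing $X(t)$ by $X(t)-\EE[X(t)]$ does in general change $\EE[\sup_t X(t)]$ unless $t\mapsto\EE[X(t)]$ is constant, so the inequality cannot be reduced to the centered case in the way you phrase it. In fact the lemma is false for non-centered processes (take $X$ deterministic and nonconstant: then $d_X\equiv 0$ and the integral vanishes, but $\EE[\sup_t X(t)]=\sup_t X(t)$ can be arbitrary). The result is tacitly stated for centered processes --- which is how Ledoux states it and how the paper uses it, since all the processes $u$, $u_n$, $Y_n$ are mean-zero stochastic integrals --- so you should simply assume $X$ centered from the outset rather than attempt a reduction.
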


\section{Proof  of Theorem \ref{pr:Chung:u}}

\subsection{Proof  of Theorem \ref{pr:Chung:u}}
\begin{proof}[Proof  of Theorem \ref{pr:Chung:u}]
\noindent{(1) {\bf Lower bound:}}   In this step, we prove the lower bound:
 \begin{equation}\label{u:lower}
\liminf_{\varepsilon\downarrow0}\sup_{t\in[0,\varepsilon]}
		 \frac{|u(t,x)|}{\e^{\theta}(\log\log \e^{-1})^{-\theta}}\geq
		 \kappa\lambda^{ \theta },\ \ \text{a.s.}
\end{equation}

For any $c>0$ and integer $n\ge1$, let $r_n:=e^{-cn}$.   For any fixed $x\in \mathbb R$ and $0<\delta<\lambda$, define
 $$
A_n:=\left\{ \sup_{t\in[0,r_n]}|u(t,x)|    \leq  \kappa \left( \frac{ \delta  r_n}
     {  \log \log (1/r_n) }\right)^{\theta} \right\}.
 $$
  By the self-similarity of $\{u(t, x)\}_{t\ge0}$ and    \eqref{pr:u},   there exists a constant 
  $N_0>0$ such that for any $n\ge N_0$,
 $$
 \mathbb P\left(A_n\right) \le (nc)^{ -  \frac{\lambda+\delta}{2\delta}}.
 $$
 Consequently,
 $$
 \sum_{n\in \mathbb N} \mathbb P\left(A_n\right) <\infty.
 $$
Using the Borel-Cantelli lemma and the  arbitrariness of $\delta\in (0,\lambda)$, we have
 \begin{equation*}
  \liminf_{n\rightarrow\infty}\sup_{t\in[0,r_n]}\frac{|u(t,x)|}  { {r_n^{\theta}(\log\log r_n^{-1})^{-\theta}} }\geq
\kappa \lambda^{\theta},\ \ \text{a.s.}
\end{equation*}
This, together with a   monotonicity argument, implies that
 \begin{align*}
 \begin{split}
  \liminf_{r\rightarrow0}\sup_{t\in[0,r]}\frac{|u(t,x)|}  {{r^{\theta}(\log\log r^{-1})^{-\theta}} } \geq   \,    \liminf_{n\rightarrow\infty}\sup_{t\in[0,r_{n+1}]}\frac{|u(t,x)|}  {{r_n^{\theta}(\log\log r_n^{-1})^{-\theta}}} \ge \kappa e^{-c}    \lambda^{\theta}, \   \ \text{a.s.}
  \end{split}
  \end{align*}
Using the arbitrariness of $c>0$, we obtain the lower bound \eqref{u:lower}.

\noindent{(2) {\bf Upper bound}}.  In this step, we prove the upper bound:
  \begin{equation}\label{u:upper}
\liminf_{r\rightarrow0}\sup_{t\in[0,r]}\frac{|u(t,x)|}  {{r^{\theta}(\log\log r^{-1})^{-\theta}} } \le
		 \kappa\lambda^{ \theta },\ \ \text{a.s.}
\end{equation}
Here, we adopt a localization idea of  Lee and Xiao \cite{LX2023} and  Khoshnevisan et al. \cite{KKM2024}.

Define a family   of Gaussian random fields  $\{u_n\}_{n\in \mathbb N}$ by setting
 \begin{equation}\label{u_n}
    u_n(t,x):= \int_{[t_{n+1},t)\times\mathbb{R}} G(t-s,x-y) \, W(ds,dy), \quad (t,x) \in [t_{n+1},t_n] \times \RR,
    \end{equation}
     where
     \beq\label{t_n}
t_n:=\exp\left(-n^{1+\beta}\right), \ \ \ n\in \mathbb N,
\nneq
with $\beta$ being a constant  whose value will be chosen later.

For  simplicity,  define  $\psi: (0, 1/e) \to (0, \infty)$ by
\begin{equation}\label{def:psi}
\psi(t) := \left( \frac{t}{\log\log(1/t)} \right)^{\theta}.
\end{equation}

Notice that
\begin{equation}\label{liminf:u:upper:ineq}
\begin{split}
    \liminf_{n\rightarrow\infty}\sup_{t\in[0,t_n]}\frac{|u(t,x)|}{\psi(t_n)}
   \leq&\, \limsup_{n\rightarrow\infty}\sup_{t\in[0,t_{n+1}]}\frac{|u(t,x)|}{\psi(t_n)}\\
   &\, \, +
 \limsup_{n\rightarrow\infty}\sup_{t\in[t_{n+1},t_n]}\frac{|u(t,x)-u_n(t,x)|}{\psi(t_n)}  \\
&\,\, +\liminf_{n\rightarrow\infty}\sup_{t\in[t_{n+1},t_n]}\frac{|u_n(t,x)|}{\psi(t_n)}.
\end{split}
\end{equation}
 By Lemmas  \ref{lem:P(u:n+1)} and \ref{lem:Var(u-u_n)} below,  and the Borel-Cantelli lemma, we  have
\begin{equation}\label{u:0:tn1}
\begin{split}
 \limsup_{n\rightarrow\infty}\sup_{t\in[0,t_{n+1}]}\frac{ | u(t,x)| }  {\psi(t_n)}
= \limsup_{n\rightarrow\infty}\sup_{t\in[t_{n+1},t_n]}\frac{ | u(t,x) - u_n(t,x) |  } {\psi(t_n)}=0,\ \ \text{ a.s.}
\end{split}
\end{equation}

We now turn to the third term on the right-hand side of inequality \eqref{liminf:u:upper:ineq}.    Applying   Lemma \ref{lem:small-ball-u_n} below with  $\gamma:= \kappa(1+2\beta)^\theta \lambda^\theta$,
$$
\lim_{n\rightarrow\infty}\frac{1}{\log n}
		\log\bp\left( \sup_{t\in[t_{n+1},t_n]}| u_n(t,x) | \le  \kappa(1+2\beta)^\theta \lambda^\theta \psi(t_n)\right)
=- 1+\frac{\beta}{1+2\beta}.
$$
Consequently,
 \begin{equation}\label{eq probab un}
\sum_{n\in\mathbb N}\mathbb P \left( \sup_{t\in[t_{n+1},t_n]}| u_n(t,x) | \le  \kappa(1+2\beta)^\theta \lambda^\theta \psi(t_n)\right) =\infty.
\end{equation}
The independence of the families $\{u_n(t, x): t_{n+1} \le t \le t_n\}_{n \ge 1}$, together with  \eqref{eq probab un} and the Borel-Cantelli lemma, yields that
 \begin{equation}\label{un:tn1:tn}
\liminf_{n\to\infty}\sup_{t\in[t_{n+1},t_n]}\frac{|u_n(t,x)| }{\psi(t_n)}\leq
	  \kappa (1+2\beta)^{ \theta }  \lambda ^{ \theta },                                            \ \ \text{a.s.}
\end{equation}
Combining inequalities \eqref{liminf:u:upper:ineq}, \eqref{u:0:tn1}, and \eqref{un:tn1:tn}, and passing to the limit as $\beta \downarrow 0$, we obtain \eqref{u:upper}.    The proof is complete.
\end{proof}

   \subsection{Some important lemmas} In this part, we establish some lemmas used in the proof of Theorem \ref{pr:Chung:u}.
  Recall the sequence $t_n$ defined by \eqref{t_n}.  Using the mean-value theorem, we have that for all $n\in\mathbb N$,
\beq\label{t/t}
\frac{t_{n+1}}{t_n}\leq\exp\left(-(1+\beta)n^\beta\right).
\nneq
 \begin{lemma}\label{lem:P(u:n+1)}
For any fixed $x\in\RR$ and for every $\delta>0$, there exist $c_{3,1}>0$ and $N_1(\delta)>0$, depending on $\delta$, such that for all $n\geq N_1(\delta)$,
$$
\bp\left( \sup_{t\in[0,t_{n+1}]}|u(t,x)| \geq\delta\psi(t_n) \right)
\leq c_{3,1}\exp \left( -\frac{ \exp\left( 2\theta(1+\beta) n^{\beta} \right) }
{c_{3,1} \left( \log n\right)^{2\theta}} \right).
$$
\end{lemma}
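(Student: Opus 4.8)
The plan is to exploit the self-similarity of $\{u(t,x)\}_{t\ge 0}$ from Proposition~\ref{solution}(c) in order to rescale the supremum over the tiny interval $[0,t_{n+1}]$ onto the fixed interval $[0,1]$, and then to apply Borell's inequality (Lemma~\ref{Borell}). Since the process is Gaussian and self-similar with index $\theta$, taking $\rho=t_{n+1}$ in the scaling relation $\{u(\rho t,x)\}_{t\ge0}\overset{d}{=}\{\rho^{\theta}u(t,x)\}_{t\ge0}$ and substituting $s=t_{n+1}t$ yields the distributional identity
\[
\sup_{t\in[0,t_{n+1}]}|u(t,x)|\;\overset{d}{=}\;t_{n+1}^{\theta}\,M,\qquad M:=\sup_{s\in[0,1]}|u(s,x)| .
\]
Hence the probability in the lemma equals $\bp\!\bigl(M\ge \delta\,\psi(t_n)\,t_{n+1}^{-\theta}\bigr)$. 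From \eqref{def:psi} and \eqref{t_n} one has $\psi(t_n)=t_n^{\theta}\bigl((1+\beta)\log n\bigr)^{-\theta}$, so, using the bound \eqref{t/t} on $t_{n+1}/t_n$, the threshold satisfies
\[
\delta\,\psi(t_n)\,t_{n+1}^{-\theta}
=\delta\Bigl(\tfrac{t_n}{t_{n+1}}\Bigr)^{\theta}\bigl((1+\beta)\log n\bigr)^{-\theta}
\;\ge\;\delta\,\frac{\exp\bigl(\theta(1+\beta)n^{\beta}\bigr)}{\bigl((1+\beta)\log n\bigr)^{\theta}}=:\lambda_n .
\]

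Next I would record that $\{u(s,x)\}_{s\in[0,1]}$ admits a continuous, hence almost surely bounded, version: this follows from the H\"older-type estimate in Proposition~\ref{solution}(b) via Kolmogorov's continuity criterion, or equivalently from Dudley's theorem (Lemma~\ref{Dudley}), since $d_u(s,t)\le c_{2,2}|t-s|^{\theta}$ gives $N([0,1],d_u;\varepsilon)\lesssim\varepsilon^{-1/\theta}$ and the resulting entropy integral converges. Thus $\mu:=\EE\bigl[\sup_{s\in[0,1]}u(s,x)\bigr]<\infty$, and by Proposition~\ref{solution}(a) the variance proxy is $\sigma^2_{[0,1]}:=\sup_{s\in[0,1]}\EE[u(s,x)^2]=c_{2,1}$. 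Applying Borell's inequality, and using the symmetry $u\overset{d}{=}-u$ to pass from the one-sided to the two-sided supremum, we obtain for every $\lambda>\mu$
\[
\bp(M\ge\lambda)\;\le\;2\,\bp\Bigl(\sup_{s\in[0,1]}u(s,x)\ge\lambda\Bigr)\;\le\;4\exp\!\Bigl(-\frac{(\lambda-\mu)^2}{2c_{2,1}}\Bigr).
\]

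Finally I would take $\lambda=\lambda_n$. Since $\beta>0$ and $\theta>0$, $\lambda_n\to\infty$, so there is $N_1(\delta)$ such that $\lambda_n\ge 2\mu$, and therefore $(\lambda_n-\mu)^2\ge\lambda_n^2/4$, for all $n\ge N_1(\delta)$; then
\[
\bp\Bigl(\sup_{t\in[0,t_{n+1}]}|u(t,x)|\ge\delta\psi(t_n)\Bigr)
\;\le\;4\exp\!\Bigl(-\frac{\lambda_n^2}{8c_{2,1}}\Bigr)
\;=\;4\exp\!\Bigl(-\frac{\delta^2\exp\bigl(2\theta(1+\beta)n^{\beta}\bigr)}{8c_{2,1}(1+\beta)^{2\theta}(\log n)^{2\theta}}\Bigr),
\]
which is exactly the asserted bound with $c_{3,1}:=\max\{4,\,8c_{2,1}(1+\beta)^{2\theta}\delta^{-2}\}$; as $\beta$, $x$ and $c_{2,1}$ are fixed, $c_{3,1}$ indeed depends only on $\delta$.

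The argument is essentially routine, and I do not expect a genuine obstacle. The two points that require a little care are: (i) invoking self-similarity correctly so that the supremum over $[0,t_{n+1}]$ becomes $t_{n+1}^{\theta}$ times the supremum over $[0,1]$, and then combining \eqref{t/t} with the explicit form of $\psi(t_n)$ to produce the decisive factor $\exp(\theta(1+\beta)n^{\beta})$ in the threshold; and (ii) passing from the one-sided supremum controlled by Borell's inequality to the two-sided $M$ and absorbing the finite, $n$-independent mean $\mu$ into the exponent, which is legitimate precisely because $\lambda_n$ diverges. Everything else is bookkeeping with the constants $c_{2,1}$, $\mu$, $\beta$ and $\delta$.
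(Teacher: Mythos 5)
Your proof is correct, but it takes a genuinely different route from the paper's. The paper works directly on the shrinking interval $[0,t_{n+1}]$: it invokes Dudley's theorem (Lemma~\ref{Dudley}) together with the covering bound \eqref{upper:N:T} and diameter bound \eqref{Du} to obtain the \emph{sharp} estimate $\EE\bigl[\sup_{t\in[0,t_{n+1}]}|u(t,x)|\bigr]\le c_{3,2}t_{n+1}^{\theta}$, verifies this is at most $\tfrac{\delta}{2}\psi(t_n)$ for $n$ large, and then applies Borell's inequality on $[0,t_{n+1}]$ with the \emph{shrinking} variance proxy $\sigma^2_{[0,t_{n+1}]}=c_{2,1}t_{n+1}^{2\theta}$ coming from \eqref{u-L2}. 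You instead use the self-similarity from Proposition~\ref{solution}(c) to rescale the whole problem onto the fixed interval $[0,1]$, where the variance proxy is the constant $c_{2,1}$ and you only need $\mu=\EE\bigl[\sup_{[0,1]}u\bigr]<\infty$ (a soft fact, not a sharp rate); the divergent factor $\exp(\theta(1+\beta)n^{\beta})/((1+\beta)\log n)^{\theta}$ is then built into the threshold $\lambda_n$ rather than into $\sigma^2$. The two mechanisms are equivalent here because $u(\cdot,x)$ is exactly self-similar; your version is a little cleaner in that it sidesteps the explicit entropy-integral computation \eqref{Dudley-u}--\eqref{ineq-1}, whereas the paper's argument would also work for processes that are merely H\"older-continuous in $L^2$ without exact self-similarity. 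One very minor remark: you pick up an extra factor of $2$ (hence a $4$ rather than a $2$ in front) by splitting $\sup|u|$ via symmetry; the paper instead applies Borell directly to $\sup_{(t,\epsilon)}\epsilon\,u(t,x)$ over the doubled index set, retaining the constant $2$. This is immaterial since the prefactor is absorbed into $c_{3,1}$.
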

 \begin{proof}
For every $\varepsilon>0$ and $i\in \mathbb N$, let $s_i= i\left(\frac{\varepsilon}{c_{2,2}}\right )^{\frac1\theta}$.
By \eqref{equ-metric-d1},
  $$d_{u}(s_{i-1},s_i) \leq \varepsilon.$$
     Consequently,    the covering number $N([0,t_{n+1}],d_{u}; \varepsilon)$     satisfies  that
\begin{equation}\label{upper:N:T}
N([0,t_{n+1}],d_{u}; \varepsilon)\leq   2 t_{n+1} \left(\frac{\varepsilon}{c_{2,2}}\right )^{-\frac1\theta}.
\end{equation}
Using  \eqref{equ-metric-d1} again, the diameter of $[0,t_{n+1}]$ under the canonical metric of $\{u(t, x)\}_{t\ge0}$ is
\begin{equation}\label{Du}
\begin{split}
D_{u}([0,t_{n+1}]) \leq c_{2,2}t_{n+1}^{\theta}.
\end{split}
\end{equation}
By \eqref{upper:N:T},  \eqref{Du},  Lemma \ref{Dudley}, and the change of variables, there exists  $c_{3,2}>0$  such that
\begin{equation}\label{Dudley-u}
\begin{split}
\EE\left[ \sup_{t\in[0,t_{n+1}]}|u(t,x)| \right]
\leq& \, c_{2,3}\int_0^{D_{u}([0,t_{n+1}])} \sqrt{\log N\left( [0,t_{n+1}],d_{u};\varepsilon\right)}d\varepsilon\\
\leq& \, c_{2,3}\int\limits_0^{c_{2,2}t_{n+1}^{\theta}}
        \sqrt{\log\left(2t_{n+1}(c_{2,2})^{\frac1\theta}\varepsilon^{-\frac1\theta}
        \right)}d\varepsilon\\
=&\,c_{2,3} c_{2,2}\theta2^\theta    t_{n+1}^\theta
          \int_0^{\frac12} \omega^{\theta-1}\sqrt{-\log \omega}d\omega\\
\leq&\, c_{3,2}t_{n+1}^{\theta}.
\end{split}
\end{equation}
Here, the following inequality is used in the last step:  For any $q<1$ and $0<\eta<1$, there exists an explicit constant $c$ depending  on $q,\eta$ such that for any $0<A<\eta<1$,
\begin{equation}\label{ineq-1}
\int_0^{A}\omega^{-q}\sqrt{-\log \omega}d\omega\leq c  A^{1-q}\sqrt{-\log A},
\end{equation}
 see \cite[Lemma B.3]{CT2025})

   For every $\delta>0$, by \eqref{def:psi},  \eqref{t/t}, and \eqref{Dudley-u},  there exists $N_1(\delta)>0$, depending on $\delta$, such that for all $n\geq N_1(\delta)$,
\begin{equation}\label{E:tx}
\EE\left[ \sup_{t\in[0,t_{n+1}]}|u(t,x)| \right]\leq\frac{\delta}{2} \psi(t_n).
\end{equation}
By Lemma \ref{Borell}, \eqref{u-L2},  \eqref{E:tx},   and \eqref{t/t},
there exists $c_{3,3}>0$  such  that for all  $n\geq N_1(\delta)$,
\begin{equation}\label{pp:u}
\begin{split}
\bp\left( \sup_{t\in[0,t_{n+1}]}|u(t,x)| \geq\delta\psi(t_n) \right)
\leq&\, \bp\left( \sup_{t\in[0,t_{n+1}]}|u(t,x)|-\EE\left[ \sup_{t\in[0,t_{n+1}]}|u(t,x)|\right]
       \geq\frac{\delta\psi(t_n)}{2}\right)\\
\le &\, 2\exp\left(-\frac{\delta^2}{8c_{2,1}(\log\log (1/t_n))^{2\theta}} \left(\frac{t_n}{t_{n+1}}\right)^{2\theta} \right)\\
\leq&\, 2\exp\left( -\frac{ \exp\left( 2\theta(1+\beta) n^{\beta} \right)}
{ \left( 8c_{2,1}(1+\beta)^{2\theta}/\delta^2 \right)(\log n)^{2\theta}}  \right)\\
\leq&\, c_{3,3}\exp \left( -\frac{ \exp\left( 2\theta(1+\beta) n^{\beta} \right) }
{c_{3,3} \left( \log n \right)^{2\theta}} \right).
\end{split}
\end{equation}
  The proof is complete.
\end{proof}

Recall the definition of $u_n$ in \eqref{u_n}.  Next, we establish a sharp bound on the approximation error $|u(t,x)- u_n(t,x)|$.
 \begin{lemma}\label{lem:Var(u-u_n)}
For any fixed $x\in\RR$ and for every $\delta>0$, there exist $c_{3,4}>0$ and $N_2(\delta)>0$, depending on $\delta$, such that for all $n\geq  N_2(\delta)$,
\begin{align}\label{eq u-u_n}
 \bp\left(\sup_{t\in[t_{n+1},t_n]}|u(t,x)- u_n(t,x)| \ge \delta\psi(t_n)\right)
   \le c_{3,4}\exp \left( -\frac{\exp\left( 2\theta(1+\beta) n^{\beta} \right)}
 {c_{3,4} \left( \log n \right)^{2\theta}} \right).
\end{align}
 \end{lemma}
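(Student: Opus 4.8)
The plan is to mimic the proof of Lemma \ref{lem:P(u:n+1)}, replacing the process $\{u(t,x)\}_{t\in[0,t_{n+1}]}$ by the centered Gaussian process $\{(u-u_n)(t,x)\}_{t\in[t_{n+1},t_n]}$. By \eqref{u} and \eqref{u_n}, for $t\in[t_{n+1},t_n]$ one has
\[
u(t,x)-u_n(t,x)=\int_{[0,t_{n+1})\times\RR}G(t-s,x-y)\,W(ds,dy),
\]
so the argument reduces to four ingredients: (i) a sharp variance bound on $u(t,x)-u_n(t,x)$; (ii) a covering-number bound for the canonical metric $d_{u-u_n}$ on $[t_{n+1},t_n]$; (iii) Dudley's theorem (Lemma \ref{Dudley}) to push $\EE[\sup_{t\in[t_{n+1},t_n]}(u-u_n)(t,x)]$ below $\frac\delta2\psi(t_n)$ for $n$ large; and (iv) Borell's inequality (Lemma \ref{Borell}) to turn this into the claimed exponential tail bound.

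For step (i), I would use the spectral isometry \eqref{eq H product} together with \eqref{F-Green kernel} and the change of variables $\eta=(2(t-r))^{1/\alpha}\xi$ to get, with exactly the constant $c_{2,1}$ of \eqref{u-L2},
\[
\|u(t,x)-u_n(t,x)\|_{L^2(\Omega)}^2=c_{2,1}\bigl(t^{2\theta}-(t-t_{n+1})^{2\theta}\bigr).
\]
Because the standing assumption $H\in(1-\alpha/2,1)$ forces $2\theta\in(0,1)$ via \eqref{eq theta}, the map $r\mapsto r^{2\theta}$ is concave, hence subadditive, so $t^{2\theta}\le(t-t_{n+1})^{2\theta}+t_{n+1}^{2\theta}$ and therefore
\[
\sup_{t\in[t_{n+1},t_n]}\|u(t,x)-u_n(t,x)\|_{L^2(\Omega)}^2\le c_{2,1}\,t_{n+1}^{2\theta}.
\]
This is the heart of the matter: the approximation error lives on the \emph{fine} scale $t_{n+1}^{\theta}$ rather than the coarse scale $t_n^{\theta}$, which is precisely what makes it negligible compared with $\psi(t_n)$.

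For step (ii), since $\dot W$ is white in time the time-shifted field $\{u_n(t_{n+1}+v,x)\}_{v\ge0}$ has the same law as $\{u(v,x)\}_{v\ge0}$, so Proposition \ref{solution}(b) gives $\|u_n(t,x)-u_n(s,x)\|_{L^2(\Omega)}\le c_{2,2}|t-s|^\theta$ uniformly in $n$; combined with \eqref{equ-metric-d1} this yields $d_{u-u_n}(s,t)\le 2c_{2,2}|t-s|^\theta$ for $s,t\in[t_{n+1},t_n]$, whence $N([t_{n+1},t_n],d_{u-u_n};\varepsilon)\lesssim t_n\,\varepsilon^{-1/\theta}$ for small $\varepsilon$. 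Together with the diameter bound $D_{u-u_n}([t_{n+1},t_n])\le 2\sqrt{c_{2,1}}\,t_{n+1}^{\theta}$ from step (i), Lemma \ref{Dudley} and inequality \eqref{ineq-1}, after the substitution $\varepsilon=t_{n+1}^{\theta}\omega$ and using $\log(t_n/t_{n+1})\lesssim n^{\beta}$ from \eqref{t/t}, give
\[
\EE\Bigl[\sup_{t\in[t_{n+1},t_n]}(u-u_n)(t,x)\Bigr]\lesssim t_{n+1}^{\theta}\,n^{\beta/2}.
\]
Since $(t_{n+1}/t_n)^{\theta}$ decays faster than any negative power of $n$ while $\psi(t_n)$ is comparable to $t_n^{\theta}(\log n)^{-\theta}$ by \eqref{def:psi} and \eqref{t_n}, the right-hand side is at most $\frac\delta2\psi(t_n)$ once $n\ge N_2(\delta)$. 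Finally, applying Lemma \ref{Borell} to $\pm(u-u_n)(\cdot,x)$ with $\sigma_{\mathbb T}^2\le c_{2,1}t_{n+1}^{2\theta}$ and using
\[
\frac{\psi(t_n)^2}{t_{n+1}^{2\theta}}\ge\frac{\exp\bigl(2\theta(1+\beta)n^{\beta}\bigr)}{\bigl((1+\beta)\log n\bigr)^{2\theta}},
\]
one obtains \eqref{eq u-u_n} after absorbing the dependence on $\delta$ and $\beta$ into $c_{3,4}$. The only genuinely delicate step is (i): getting the right constant and, above all, the right scale $t_{n+1}^{\theta}$ via the subadditivity of $r\mapsto r^{2\theta}$; once that is in hand, the remaining steps are a routine transcription of the proof of Lemma \ref{lem:P(u:n+1)}.
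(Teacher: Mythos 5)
Your proof is correct and follows essentially the same route as the paper: the same exact variance identity $\|u(t,x)-u_n(t,x)\|_{L^2(\Omega)}^2=c_{2,1}\bigl(t^{2\theta}-(t-t_{n+1})^{2\theta}\bigr)\le c_{2,1}t_{n+1}^{2\theta}$, a H\"older bound $d_{u-u_n}(s,t)\lesssim|t-s|^\theta$ to control the entropy, Dudley's bound on the expected supremum, and Borell's inequality. The only cosmetic difference is that you bound $d_{u-u_n}$ via the triangle inequality and time-stationarity of the noise, while the paper uses the independence of $u_n(t,\cdot)-u_n(s,\cdot)$ from $Y_n(t,\cdot)-Y_n(s,\cdot)$ to deduce $d_{Y_n}\le d_u$ directly; both yield the same covering-number estimate.
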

\begin{proof}
The proof of \eqref{eq u-u_n} is similar to that of   Lemma \ref{lem:P(u:n+1)},   using  Dudley's metric entropy theorem and Borell's inequality.   For brevity, denote
  \begin{equation}\label{def:Y_n}
Y_n(t, x):=u(t,x)-u_n(t, x).
\end{equation}
Let us   give the upper bound for the  covering number $N\left([t_{n+1},t_n],d_{Y_n};\varepsilon\right)$ and the diameter $D_{Y_n}([t_{n+1},t_n])$ under the canonical metric of $\{Y_n(t, x)\}_{t\ge0}$, respectively.

For any  $t_n\geq t>s\geq t_{n+1}$, since $Y_n(t, x)-Y_n(s, x)$ is independent of $u_n(t, x)-u_n(s, x)$, by \eqref{equ-metric-d1} and \eqref{def:Y_n}, we have
 \begin{equation}\label{Dudley-(u-u_n)}
\| Y_n(t,x)- Y_n(s,x)\|_{L^2(\Omega)}\leq \, c_{2,2}|t-s|^{\theta}.
\end{equation}
 Similarly to the proof \eqref{upper:N:T},    the covering number of  $N\left([t_{n+1},t_n], d_{Y_n}; \varepsilon\right)$  satisfies
that
  \begin{equation}\label{N:dY}
N\left([t_{n+1},t_n],d_{Y_n}; \varepsilon\right) \leq c_{3,5}(t_n-t_{n+1})\varepsilon^{-\frac1\theta}.
\end{equation}

Using   \eqref{eq H product} and   the change of variables
$\eta= 2(t-s) \xi^{\alpha}$, we have
\begin{equation}\label{eq Y_n moment}
\begin{split}
\|Y_n(t,x)\|_{L^2(\Omega)}^2
=&\,2C_{H}\int_0^{t_{n+1}}\int_0^{\infty}e^{-2(t-s)\xi^{\alpha}}\xi^{1-2H}d\xi ds \\
=&\, \frac{C_{H}}{\alpha}2^{\frac{2H+\alpha-2}{\alpha}}\int_0^\infty e^{-\eta }\eta^{\frac{2-2H}{\alpha}-1} d\eta \int_0^{t_{n+1}} (t-s)^{\frac{2H-2}{\alpha}} ds \\
=&\,  c_{2,1}\left(t^{2\theta}-(t-t_{n+1})^{2\theta}\right)\\
\le &\, c_{2,1}  t_{n+1}^{2\theta}.
\end{split}
\end{equation}
where the elementary $a^{2\theta}-b^{2\theta}\le   (a-b)^{2\theta}$ is used for any $a>b>0$ and $\theta\in (0,1/2)$.

By  \eqref{t/t} and \eqref{eq Y_n moment}, we have that for all $n\in\mathbb N$,
\begin{equation}\label{upper:dY}
D_{Y_n}([t_{n+1},t_n])
\leq  \,  2 c_{2,1}^{\frac12}t_n^{\theta}\exp\left( -\theta(1+\beta) n^{\beta} \right).
\end{equation}

Using   \eqref{N:dY}, \eqref{upper:dY},  and Lemma \ref{Dudley}, we have
\begin{equation}\label{Dudleys:u-u_n}
\begin{split}
\EE \left[ \sup_{t\in[t_{n+1},t_n]}|Y_n(t,x)|\right]
\leq&\,  c_{2,3}\int_0^{D_{Y_n}([t_{n+1},t_n])}
      \sqrt{\log N\left( [t_{n+1},t_n],d_{Y_n};\varepsilon\right)}d\varepsilon\\
\leq&\, c_{2,3}\int\limits_0^{     2c_{2,1}^{\frac12}t_n^{\theta}
       \exp\left( -\theta(1+\beta) n^{\beta} \right)      }
     \sqrt{\log\left(c_{3,5} (t_n-t_{n+1}) \varepsilon^{-\frac1\theta}\right)}d\varepsilon\\
\le &\, c_{2,3} \theta   \left(c_{3,5}(t_n-t_{n+1})\right)^{\theta}
     \int_0^{A_n}\omega^{\theta-1}\sqrt{-\log \omega}d\omega,
\end{split}
\end{equation}
where  the change of variables $\omega:=c_{3,5}^{-1}(t_n-t_{n+1})^{-1}\varepsilon^{\frac1\theta}$ is used, and
$$A_n:=c_{3,5}^{-1}(4c_{2,1})^{\frac1{2\theta}}(t_n-t_{n+1})^{-1}t_n\exp\left( -(1+\beta)n^{\beta} \right).$$

   By   \eqref{t/t} and  \eqref{ineq-1}, there exist $c_{3,6}>0$ and $N_3>0$ such that for all $n\geq N_3$,
\begin{equation}\label{Dudleys:u-u_n:1}
\begin{split}
\EE \left[ \sup_{t\in[t_{n+1},t_n]}|Y_n(t,x)|\right]
\leq& \,  c_{3,6}t_n^{\theta}
    \exp\left( -\theta(1+\beta) n^{\beta} \right)n^{(1+\beta)/2}.
\end{split}
\end{equation}
Similarly as the proof of \eqref{pp:u}, using  Borell's inequality and \eqref{Dudleys:u-u_n:1}, there exist $c_{3,7}>0$ and $N_4(\delta)$, depending on $\delta$, such that for all
$n\geq N_4(\delta)$,
\begin{equation*}
\mathbb P\left( \sup_{t\in[t_{n+1},t_n]}|Y_n(t,x)| \ge \delta\psi(t_n)\right)
\le c_{3,7}\exp \left( -\frac{ \exp\left( 2\theta(1+\beta)n^{\beta} \right)}
   {c_{3,7} \left( \log n \right)^{2\theta}} \right).
\end{equation*}
 The proof is complete.
\end{proof}

Next, we derive a  small-ball estimate for $u$ on $[t_{n+1},  t_n]$.
\begin{lemma}\label{lem:small-ball-u}
For any fixed $x\in\RR$ and for every $\gamma>0$,
	\begin{align}	\label{eq sb u tn}
		\lim_{n\to\infty} \frac{1}{\log n}
		\log\bp\left(\sup_{t\in[t_{n+1},t_n]} | u(t,x)|\le \gamma\psi(t_n)\right)
		=-\lambda \left(  \frac{ \kappa}{\gamma} \right)^{\frac1\theta} (1+\beta).
	\end{align}
\end{lemma}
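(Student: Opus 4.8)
The plan is to transfer the small-ball event on the short interval $[t_{n+1},t_n]$ to a unit-scale interval by the $\theta$-self-similarity of $\{u(t,x)\}_{t\ge0}$ (Proposition \ref{solution}(c)), and then to read off the asymptotics from the unit-interval small-ball estimate of Lemma \ref{lem smb}, absorbing the left-endpoint correction with Lemma \ref{lem:P(u:n+1)}. Concretely, set $a_n:=t_{n+1}/t_n$. Applying self-similarity to the a.s.\ continuous version of $u(\cdot,x)$ with scaling factor $t_n$ gives $\sup_{t\in[t_{n+1},t_n]}|u(t,x)|\overset{d}{=}t_n^\theta\sup_{s\in[a_n,1]}|u(s,x)|$, and since $\psi(t_n)/t_n^\theta=\bigl(\log\log(1/t_n)\bigr)^{-\theta}=\bigl((1+\beta)\log n\bigr)^{-\theta}$, writing $\varepsilon_n:=\gamma\bigl((1+\beta)\log n\bigr)^{-\theta}$ we obtain the exact identity
\[
\bp\Bigl(\sup_{t\in[t_{n+1},t_n]}|u(t,x)|\le\gamma\psi(t_n)\Bigr)=\bp\Bigl(\sup_{s\in[a_n,1]}|u(s,x)|\le\varepsilon_n\Bigr).
\]
Note that $\varepsilon_n\downarrow0$ and $\kappa^{1/\theta}\lambda\,\varepsilon_n^{-1/\theta}=\lambda(\kappa/\gamma)^{1/\theta}(1+\beta)\log n$, which is exactly the claimed rate.

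For the lower bound on this probability, the inclusion $[a_n,1]\subseteq[0,1]$ forces $\{\sup_{[0,1]}|u(\cdot,x)|\le\varepsilon_n\}\subseteq\{\sup_{[a_n,1]}|u(\cdot,x)|\le\varepsilon_n\}$, so by Lemma \ref{lem smb}, $\log\bp(\sup_{[a_n,1]}|u(\cdot,x)|\le\varepsilon_n)\ge\log\bp(\sup_{[0,1]}|u(\cdot,x)|\le\varepsilon_n)=-(1+o(1))\kappa^{1/\theta}\lambda\,\varepsilon_n^{-1/\theta}$, and dividing by $\log n$ yields the $\liminf$ half of \eqref{eq sb u tn}. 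For the upper bound, use the inclusion
\[
\Bigl\{\sup_{[a_n,1]}|u(\cdot,x)|\le\varepsilon_n\Bigr\}\subseteq\Bigl\{\sup_{[0,1]}|u(\cdot,x)|\le\varepsilon_n\Bigr\}\cup\Bigl\{\sup_{[0,a_n]}|u(\cdot,x)|>\varepsilon_n\Bigr\},
\]
valid because if $\sup_{[0,1]}|u|>\varepsilon_n$ but $\sup_{[a_n,1]}|u|\le\varepsilon_n$ then the supremum over $[0,1]$ must be attained on $[0,a_n]$. Rescaling once more, $\bp(\sup_{[0,a_n]}|u(\cdot,x)|>\varepsilon_n)=\bp(\sup_{[0,t_{n+1}]}|u(\cdot,x)|>\gamma\psi(t_n))$, which by Lemma \ref{lem:P(u:n+1)} (with $\delta=\gamma$) decays faster than any power of $1/n$, whereas $\bp(\sup_{[0,1]}|u(\cdot,x)|\le\varepsilon_n)=n^{-(1+o(1))\lambda(\kappa/\gamma)^{1/\theta}(1+\beta)}$ by Lemma \ref{lem smb}. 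Hence the first term dominates, $\bp(\sup_{[a_n,1]}|u(\cdot,x)|\le\varepsilon_n)\le n^{-(1+o(1))\lambda(\kappa/\gamma)^{1/\theta}(1+\beta)}$, and taking logarithms and dividing by $\log n$ gives the $\limsup$ half of \eqref{eq sb u tn}.

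The only delicate point is the left-endpoint discrepancy in the upper bound: one must check that $\sup_{[0,a_n]}|u(\cdot,x)|$ is negligible on the scale $\varepsilon_n\asymp(\log n)^{-\theta}$, i.e.\ that enlarging the interval from $[a_n,1]$ to $[0,1]$ does not change the polynomial rate. This is precisely the content of Lemma \ref{lem:P(u:n+1)} — via the Borell/Dudley estimates the supremum of $u$ near the origin is super-exponentially small on the relevant logarithmic scale — so this term is harmless. Everything else is a direct consequence of self-similarity and the unit-interval small-ball asymptotics.
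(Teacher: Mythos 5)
Your proof is correct and follows essentially the same route as the paper: rescale the interval $[t_{n+1},t_n]$ to a sub-interval of $[0,1]$ via $\theta$-self-similarity, read the polynomial rate from Lemma \ref{lem smb}, and absorb the left-endpoint correction via the super-polynomially small estimate of Lemma \ref{lem:P(u:n+1)} combined with an event inclusion. The only (minor) stylistic difference is that the paper introduces a margin $\delta\in(0,\gamma)$ in the inclusion $\{\sup_{[t_{n+1},t_n]}|u|\le\gamma\psi(t_n)\}\subseteq\{\sup_{[0,t_n]}|u|\le(\gamma+\delta)\psi(t_n)\}\cup\{\sup_{[0,t_{n+1}]}|u|\ge\delta\psi(t_n)\}$ and lets $\delta\downarrow0$ at the end, whereas you use the sharper inclusion at the common threshold $\varepsilon_n$ (applying Lemma \ref{lem:P(u:n+1)} with $\delta=\gamma$), which avoids the final limit in $\delta$.
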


\begin{proof}
\noindent{\bf (i) Lower bound.}
 By  \eqref{pr:u} and  \eqref{def:psi}, we have that for every $\gamma>0$,
\begin{equation}\label{liminf:u}
\begin{split}
&\, \liminf_{n\rightarrow\infty}\frac 1 {\log n}
    \log\bp\left( \sup_{t\in[t_{n+1},t_n]}|u(t,x)|\leq\gamma\psi(t_n) \right)\\
     \ge     &\, \liminf_{n\rightarrow\infty}\frac 1 {\log n}
    \log\bp\left( \sup_{t\in[0,t_n]}|u(t,x)|\leq\gamma\psi(t_n) \right)\\
=&\,  \liminf_{n\rightarrow\infty}\frac 1 {\log n}
   \log\bp\left\{ \sup_{t\in[0,1]}|u(t,x)|\leq\gamma (\log\log(1/t_n))^{-\theta} \right\}\\
  = &\,  -\lambda \left(  \frac{ \kappa}{\gamma} \right)^{\frac1\theta} (1+\beta).
\end{split}
\end{equation}

\noindent{\bf (ii) Upper bound.}
For any $\delta\in(0,\gamma)$, by Lemma \ref{lem:P(u:n+1)}, we have that
\begin{equation}\label{limsup:u}
\begin{split}
& \limsup_{n\rightarrow\infty}\frac 1 {\log n} \log  \bp\left( \sup_{t\in[t_{n+1},t_n]}|u(t,x)|\leq\gamma\psi(t_n) \right)\\
\le &\,  \max\Bigg\{\limsup_{n\rightarrow\infty}\frac 1 {\log n} \log     \bp\left( \sup_{t\in[0,t_n]}|u(t,x)|\leq(\gamma+\delta)\psi(t_n) \right), \\
   & \ \ \ \ \   \ \ \  \ \   \limsup_{n\rightarrow\infty}\frac 1 {\log n} \log  \  \bp\left( \sup_{t\in[0,t_{n+1}]}|u(t,x)|\geq\delta\psi(t_n) \right)\Bigg\}\\
=&\, \limsup_{n\rightarrow\infty}\frac 1 {\log n}
\log\bp\left( \sup_{t\in[0,1]}|u(t,x)|\leq(\gamma+\delta) (\log\log(1/t_n))^{-\theta} \right)\\
=&\, - \lambda\left(  \frac{\kappa}{\gamma+\delta} \right)^{\frac1\theta} (1+\beta)  .
\end{split}
\end{equation}
Putting \eqref{liminf:u} and \eqref{limsup:u} together, and  letting $\delta\downarrow0$, we get   \eqref{eq sb u tn}. The proof is complete.
\end{proof}

Next, we derive a  small-ball estimate for $u_n$ on $[t_{n+1},  t_n]$, which plays an important role in the proof of Theorem \ref{pr:Chung:u}.

\begin{lemma}\label{lem:small-ball-u_n}
  For any fixed $x\in\mathbb R$ and for every $\gamma>0$,
	\begin{equation}\label{eq sb un}
		\lim_{n\to\infty} \frac{1}{\log n}
		\log\bp\left( \sup_{t\in[t_{n+1},t_n]}| u_n(t,x) | \le \gamma\psi(t_n)\right)
		=-\lambda \left(  \frac{ \kappa}{\gamma} \right)^{\frac1\theta} (1+\beta).
	\end{equation}
  \end{lemma}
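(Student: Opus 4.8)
The plan is to transfer the small-ball estimate for $u$ on $[t_{n+1},t_n]$ from Lemma~\ref{lem:small-ball-u} to the truncated field $u_n$ of \eqref{u_n}, exploiting the fact that $u_n$ and the approximation error $Y_n:=u-u_n$ are \emph{independent} on $[t_{n+1},t_n]\times\RR$. Indeed, for $t\in[t_{n+1},t_n]$ one has $Y_n(t,x)=\int_{[0,t_{n+1})\times\RR}G(t-s,x-y)\,W(ds,dy)$, which depends only on the noise over the time strip $[0,t_{n+1})$, whereas $u_n(t,x)$ depends only on the noise over $[t_{n+1},t)$; since $\dot W$ is white in time and these strips are disjoint, the two Gaussian fields are independent (this was already used in the proof of Lemma~\ref{lem:Var(u-u_n)}). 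Throughout I fix $\delta\in(0,\gamma)$ and abbreviate, for $\eta>0$, $q_n(\eta):=\bp\big(\sup_{t\in[t_{n+1},t_n]}|u_n(t,x)|\le\eta\psi(t_n)\big)$ and $Q_n(\eta):=\bp\big(\sup_{t\in[t_{n+1},t_n]}|u(t,x)|\le\eta\psi(t_n)\big)$, and recall from Lemma~\ref{lem:Var(u-u_n)} that $\bp\big(\sup_{t\in[t_{n+1},t_n]}|Y_n(t,x)|>\delta\psi(t_n)\big)$ decays faster than any power of $n$, being at most $c_{3,4}\exp\big(-\exp(2\theta(1+\beta)n^\beta)/(c_{3,4}(\log n)^{2\theta})\big)$.

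For the upper bound, I would use the triangle inequality $|u|\le|u_n|+|Y_n|$ to get the inclusion $\{\sup|u_n|\le\gamma\psi(t_n)\}\cap\{\sup|Y_n|\le\delta\psi(t_n)\}\subseteq\{\sup|u|\le(\gamma+\delta)\psi(t_n)\}$ (all suprema over $[t_{n+1},t_n]$), and then independence to conclude $Q_n(\gamma+\delta)\ge q_n(\gamma)\cdot\bp\big(\sup_{t\in[t_{n+1},t_n]}|Y_n(t,x)|\le\delta\psi(t_n)\big)$. Since the last factor tends to $1$, it exceeds $1/2$ for all large $n$, giving $q_n(\gamma)\le 2\,Q_n(\gamma+\delta)$; applying $\tfrac1{\log n}\log(\cdot)$ and Lemma~\ref{lem:small-ball-u} then yields $\limsup_{n\to\infty}\tfrac1{\log n}\log q_n(\gamma)\le-\lambda(\kappa/(\gamma+\delta))^{1/\theta}(1+\beta)$, and letting $\delta\downarrow0$ gives the ``$\le$'' half of \eqref{eq sb un}.

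For the lower bound, the same triangle inequality gives $\{\sup|u|\le(\gamma-\delta)\psi(t_n)\}\cap\{\sup|Y_n|\le\delta\psi(t_n)\}\subseteq\{\sup|u_n|\le\gamma\psi(t_n)\}$, whence, without needing independence, $q_n(\gamma)\ge Q_n(\gamma-\delta)-\bp\big(\sup_{t\in[t_{n+1},t_n]}|Y_n(t,x)|>\delta\psi(t_n)\big)$. By Lemma~\ref{lem:small-ball-u} the main term $Q_n(\gamma-\delta)$ decays only polynomially in $n$, while the subtracted term decays faster than any power of $n$; so for all large $n$ one has $q_n(\gamma)\ge\tfrac12 Q_n(\gamma-\delta)$, and applying $\tfrac1{\log n}\log(\cdot)$ and Lemma~\ref{lem:small-ball-u} gives $\liminf_{n\to\infty}\tfrac1{\log n}\log q_n(\gamma)\ge-\lambda(\kappa/(\gamma-\delta))^{1/\theta}(1+\beta)$. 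Letting $\delta\downarrow0$ finishes the proof.

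I expect the only genuinely delicate point to be the comparison in the lower bound, namely verifying that the error probability $\bp(\sup|Y_n|>\delta\psi(t_n))$ is negligible next to the polynomially small quantity $Q_n(\gamma-\delta)$. This is precisely what the sharp doubly-exponential bound in Lemma~\ref{lem:Var(u-u_n)} is for, and it explains the choice of the super-exponential schedule $t_n=\exp(-n^{1+\beta})$ in \eqref{t_n}: with a merely geometric schedule the approximation error would not be small enough to be absorbed. Every other step uses nothing beyond the triangle inequality, the independence of $u_n$ and $Y_n$, and the two previously established lemmas.
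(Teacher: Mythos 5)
Your proof is correct and follows essentially the same route as the paper: transfer the small-ball estimate from $u$ to $u_n$ on $[t_{n+1},t_n]$ via the triangle inequality, using Lemma~\ref{lem:small-ball-u} for the main term and Lemma~\ref{lem:Var(u-u_n)} to show the approximation error is negligible. The lower-bound argument is identical to the paper's. The one small difference is in the upper bound: you exploit independence of $u_n$ and $Y_n$ (noise over disjoint time strips) to get the multiplicative inequality $Q_n(\gamma+\delta)\ge q_n(\gamma)\,\bp(\sup|Y_n|\le\delta\psi(t_n))$, then use that the second factor tends to $1$; the paper instead argues directly from the union bound $q_n(\gamma)\le Q_n(\gamma+\delta)+\bp(\sup|Y_n|>\delta\psi(t_n))$ and takes a maximum after passing to logs. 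Your variant is marginally more robust (it only needs $\bp(\sup|Y_n|\le\delta\psi(t_n))$ bounded away from $0$, not a decay comparison), but in this setting where the error probability is doubly exponentially small it makes no difference, and the argument uses an extra structural input (independence) that the paper's upper bound does not need.
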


 \begin{proof}
 \noindent{\bf (i) Upper bound.}
For any $\delta\in(0,\gamma)$, by Lemmas \ref{lem:Var(u-u_n)} and  \ref{lem:small-ball-u}, we have that
\begin{equation}\label{limsup:u_n}
\begin{split}
& \limsup_{n\rightarrow\infty}\frac 1 {\log n} \log  \bp\left( \sup_{t\in[t_{n+1},t_n]}|u_n(t,x)|\leq\gamma\psi(t_n) \right)\\
\le &\,  \max\Bigg\{\limsup_{n\rightarrow\infty}\frac 1 {\log n} \log     \bp\left( \sup_{t\in[t_{n+1},t_n]}|u(t,x)|\leq(\gamma+\delta)\psi(t_n) \right), \\
   & \ \ \ \ \   \ \ \  \ \   \limsup_{n\rightarrow\infty}\frac 1 {\log n} \log  \  \bp\left( \sup_{t\in[t_{n+1},t_n]}|u(t,x)-u_n(t,x)|\geq\delta\psi(t_n) \right)\Bigg\}\\
=&\,   -\lambda \left(  \frac{ \kappa}{\gamma+\delta} \right)^{\frac1\theta} (1+\beta).
\end{split}
\end{equation}

\noindent{\bf (ii) Lower bound.}  For any $\delta\in(0,\gamma)$, by Lemma \ref{lem:Var(u-u_n)}, we have that for all  $n\geq N_2(\delta)$,
\begin{equation*}
\begin{split}
&\bp\left( \sup_{t\in[t_{n+1},t_n]}|u(t,x)|\leq(\gamma-\delta)\psi(t_n) \right)\\
\leq&\, \bp\left( \sup_{t\in[t_{n+1},t_n]}|u_n(t,x)|\leq\gamma\psi(t_n) \right)
    + \bp\left( \sup_{t\in[t_{n+1},t_n]}|u(t,x)-u_n(t,x)|\geq\delta\psi(t_n) \right)\\
\leq&\, \bp\left( \sup_{t\in[t_{n+1},t_n]}|u_n(t,x)|\leq\gamma\psi(t_n) \right)
    +c_{3,4}\exp \left( -\frac{\exp\left( 2\theta(1+\beta) n^{\beta} \right)}
      {c_{3,4} \left( \log n \right)^{2\theta}} \right).
\end{split}
\end{equation*}
This, together with   Lemma \ref{lem:small-ball-u}, implies that
\begin{equation}\label{liminf:u_n}
\begin{split}
 & \liminf_{n\rightarrow\infty}\frac 1 {\log n}
       \log\bp\left( \sup_{t\in[t_{n+1},t_n]}|u_n(t,x)|\leq\gamma\psi(t_n) \right)\\
        \ge &\, \liminf_{n\rightarrow\infty}\frac 1 {\log n}
      \log\bp\left( \sup_{t\in[t_{n+1},t_n]}|u(t,x)|\leq(\gamma-\delta)\psi(t_n) \right)\\
=&\,   - \lambda \left(  \frac{ \kappa}{\gamma-\delta} \right)^{\frac1\theta}
    (1+\beta).
\end{split}
\end{equation}
Putting \eqref{limsup:u_n} and \eqref{liminf:u_n} together, and letting $\delta\downarrow 0$, we obtain \eqref{eq sb un}. The proof is complete.
\end{proof}

 \vskip0.3cm

\noindent{\bf Acknowledgments}   

The research of R. Wang is partially supported by the NSF of Hubei Province (No. 2024AFB683).

 \vskip0.3cm

\vskip0.3cm
\noindent{\bf Data availability}

No data was used for the research described in the article.

\vskip0.3cm

\end{document}